\newtheorem{thm}{Theorem}[section]
\newtheorem{lem}[thm]{Lemma}
\newtheorem{ass}[thm]{Assumption}
\newtheorem{rem}{Remark}
\theoremstyle{definition}
\numberwithin{equation}{section}
\begin{document}




\title{Strong convergence rate of the truncated Euler-Maruyama method for stochastic differential delay equations with Poisson jumps}

\author{
{\bf  Shuaibin Gao$^a$, \, Junhao Hu$^a$, \,  Li Tan$^{b,c}$\, and \, Chenggui Yuan$^d$}\\
 \footnotesize{$^{a}$ College of Mathematics and statistics, South-Central University For Nationalities,
Wuhan 430074, China}\\
 \footnotesize{$^{b}$ School of Statistics, Jiangxi University of Finance and Economics, Nanchang, Jiangxi, 330013,  China}\\
 \footnotesize{$^{c}$ Research Center of Applied Statistics, Jiangxi University of Finance and Economics,}\\
  \footnotesize{ Nanchang, Jiangxi, 330013,  China}\\
\footnotesize{$^d$ Department of Mathematics, Swansea University, Swansea, SA2 8PP, U. K. }\\
\footnotesize{shuaibingao@163.com, junhaohu74@163.com, tltanli@126.com, C.Yuan@swansea.ac.uk}}


\maketitle

\begin{abstract}
In this paper, we study
a class of super-linear stochastic differential delay equations with Poisson jumps (SDDEwPJs). The  convergence and  rate of the convergence of the truncated Euler-Maruyama  numerical solutions  to SDDEwPJs are investigated under the generalized Khasminskii-type condition.
\end{abstract}

\noindent
 {\bf Keywords}:
truncated Euler-Maruyama method; stochastic differential delay equations; Poisson jumps; rate of the convergence.



\section{Introduction}

Since the establishment of stochastic differential equations (SDEs) driven by Brownian motions,  many scholars have contributed to study properties of SDEs,  for example, \cite{1,3,16,19} and references therein.
We can observe that the stochastic systems are widely applied in many fields such as biology, chemistry, finance and economy.
When studying the realistic models, it is found that the real state is not only related to the present state, but also related to the past state.
Stochastic differential delay equations (SDDEs) are used to describe such systems \cite{4,5,20}.
Moreover, if  an emergency occurs, its impact on the systems must be taken into account.
For example, the sudden outbreak of the new coronavirus has a huge impact on the global economy, leading to a shock in the stock market.
Hence, SDEs with jumps which take both the continuous and discontinuous random effects into consideration are studied to analyze these situations \cite{11,21,27a}. In this paper, we will take the delay and jumps into the cosideration, i.e.  we shall study SDDEs with jumps  \cite{15,28,30}.

Generally speaking, the true solutions of many equations can not be calculated, so it is meaningful to investigate the numerical solutions.
For instance, the explicit Euler-Maruyama (EM) schemes are very popular to approximate the true solutions \cite{16}.
However, when the coefficients grow super-linearly, Hutzenthaler et al. in \cite{13} proved that the $p$th moments of the EM approximations diverge to infinity for all $p \in [1,\infty)$.
Thus, many implicit methods have been put forward to approximate the solutions of the equations with nonlinear growing coefficients \cite{2,10,24,27}.
In addition, since the explicit schemes require less computation,  some modified EM methods have also been established for   nonlinear stochastic equations \cite{14,18,25,26}.
In particularly, the truncated EM method was originally proposed by Mao in \cite{22} with drift and diffusion coefficients growing super-linearly. The rate of convergence  of the truncated EM method was obtained in \cite{23}.
Afterward, there are  many papers to study the truncated EM method for  stochastic equations whose coefficients grow super-linearly, and we refer to  \cite{7,8,9,12,17} and references therein. Additionally, there are many results on the numerical solutions for SDE with jumps and SDDEs with jumps. For example,
the convergence in probability of the EM method for SDDEs with jumps was discussed in \cite{15}.
The strong convergence of the EM method for SDDEs with jumps as well as the modified split-step backward Euler method approximation to the true solution was presented in \cite{30}. The semi-implicit Euler method for SDDEs with jumps is convergent with strong order $1/2$ \cite{28}.
However, there are few papers studying the numerical solutions of the super-linear SDDEs with Poisson jumps (SDDEwPJs) whose all three coefficients might grow super-linearly.
Therefore, in this paper, we will investigate the strong convergence rate of the truncated EM method for super-linear SDDEwPJs in $\mathcal{L}^p$($p>0$) sense.

This paper is organized as follows. We will introduce some necessary notation  in Section 2. The rate of convergence  in $\mathcal{L}^p$ for $p \geq 2$ will be discussed in Section 3. In Section 4, the rate of convergence   in $\mathcal{L}^p$ for $0<p<2$ will be presented. Section 5 contains an example to illustrate that our main result could cover a large class of super-linear SDDEwPJs.

\section{Mathematical preliminaries}

\quad Throughout this paper, unless otherwise specified, we use the following notation.
If $A$ is a vector or matrix, its transpose is denoted by $A^T$.
For $x\in \mathbb{R}^n$,  $|x|$ denotes its Euclidean norm.
If $A$ is a matrix, we let $|A| = \sqrt{\text{trace}(A^TA)}$ be its trace norm.
By $A \le 0$ and $A<0$, we mean $A$ is non-positive and negative definite, respectively.
If both $a, b$ are real numbers, then $a\wedge b = \text {min}\{a, b\}$ and $a\vee b=\text{max}\{a,b\}$.
Let $\left \lfloor a \right \rfloor$ denote the largest integer which does not exceed $a$.
Let $\mathbb{R} _+ = [0,+\infty)$ and $\tau > 0$.
Denote by $\mathscr{C}([-\tau,0];\mathbb{R}^n)$ the family of continuous functions $\varphi$ from $[-\tau,0] $ to $\mathbb{R}^n$ with the norm $\|\varphi\|=\text{sup}_{-\tau \le \theta \le 0}|\varphi (\theta)|$.
If $H$ is a set, denote by $\textbf{1}_H$ its indicator function; that is,  $\textbf{1}_H(x)=1$ if $x \in H$ and $\textbf{1}_H(x)=0$ if $x \notin H$.
Let $C$ stand for a generic positive real constant whose value may change in different appearances.

Let $\big ( \Omega, \mathcal{F}, \{\mathcal{F}_t\}_{t\ge 0}, \mathbb{P} \big )$ be a complete probability space with a filtration  $\{\mathcal{F}_t\}_{t\ge 0}$ satisfying the usual conditions (i.e. it is increasing and right continuous while $ \mathcal{F}_0$ contains all $\mathbb{P}$-null sets).
Let $\mathbb{E}$ denote the probability expectation with respect to $\mathbb{P}$.
For $p >0$, $\mathcal{L}^p =\mathcal{L}^p( \Omega, \mathcal{F}, \{\mathcal{F}_t\}_{t\ge 0}, \mathbb{P} \big )$ denotes the space of random variables $X$ with a norm $|X|_p :=(\mathbb{E}|X|^p)^{1/p}<\infty$. Let $\mathscr{C}_{\mathcal{F}_0}^b ([-\tau,0];\mathbb{R}^n)$ denote the family of all bounded, $\mathcal{F}_0$-measurable, $\mathscr{C}([-\tau,0];\mathbb{R}^n)$-valued random variables.
Let $B(t)  = \big (B_1(t),\cdots,B_m(t) \big ) ^T$ be an $m$-dimensional Brownian motion defined on the probability space.
Let $N(t)$ be a scalar Poisson process with the compensated Poisson process $\widetilde{N}(t) = N(t)-\lambda t$, where the parameter $\lambda > 0$ is the jump intensity. Furthermore, we assume that $B(t)$ and $N(t)$ are independent.


In this paper, we study the truncated EM method for super-linear SDDEwPJs of the form
\begin{equation}\label{dx1}
d x(t)=f(x(t),x(t-\tau))dt +g(x(t),x(t-\tau))d B(t)+h(x(t^-),x((t-\tau)^-))d N(t),~~~
t\geq 0,
\end{equation}
with the initial value
\begin{equation}\label{dx2}
\xi =\{x(\theta):-\tau \leq \theta \leq 0\}\in \mathscr{C}_{\mathcal{F}_0}^b ([-\tau,0];\mathbb{R}^n),
\end{equation}
where $x(t^-)=\lim_{s\downarrow t} x(s)$.
Here, $f: \mathbb{R}^n\times \mathbb{R}^n \rightarrow \mathbb{R}^n$, $g: \mathbb{R}^n\times \mathbb{R}^n  \rightarrow \mathbb{R}^{n \times m}$, $h: \mathbb{R}^n\times \mathbb{R}^n  \rightarrow \mathbb{R}^n$. In order to estimate the convergence rate of the truncated EM method, we assume that the initial value $\xi$ is $\gamma$-H\"{o}lder continuous, which is a standard constraint \cite{9,30}.
\begin{ass}\label{2.1}
There exist constants $\bar{K} >0$ and $\gamma \in (0,1]$ such that
\begin{equation}
\begin{split}
|\xi(\bar{t})- \xi(\bar{s})| \leq \bar{K}|\bar{t}-\bar{s}|^\gamma,~~~-\tau \leq \bar{s} < \bar{t}\leq 0.
\end{split}
\end{equation}
\end{ass}

\section{Rate of Convergence  in $\mathcal{L}^p$($p \geq 2$)}

Now, in order to obtain  the rate of convergence for  the truncated EM method for \eqref{dx1} in $\mathcal{L}^p$($p \geq 2$) sense, we need to impose the following assumptions on coefficients.\\
\begin{ass}\label{a1}
There exist constants $K_1 >0$ and $\beta \geq 0$ such that
\begin{equation}
\begin{split}
&|f(x,y)-f(\bar{x},\bar{y})|\vee |g(x,y)-g(\bar{x},\bar{y})| \\
&\leq K_1(1+|x|^\beta +|y|^\beta +|\bar{x}|^\beta +|\bar{y}|^\beta)(|x-\bar{x}|+|y-\bar{y}|)
\end{split}
\end{equation}
and
\begin{equation}
|h(x,y)-h(\bar{x},\bar{y})|\leq K_1(|x-\bar{x}|+|y-\bar{y}|)
\end{equation}
for any $x,y,\bar{x},\bar{y} \in \mathbb{R} ^n $.
\end{ass}
By Assumption \ref{a1}, we obtain
\begin{equation}
\begin{split}
|f(x,y)|\vee |g(x,y)| \leq (4K_1+|f(0,0)|+|g(0,0)|)(1+|x|^{\beta +1} +|y|^{\beta +1})
\end{split}
\end{equation}
and
\begin{equation}\label{j1}
|h(x,y)|\leq (K_1 +|h(0,0)|)(1+|x|+|y|)
\end{equation}
for any $x,y \in \mathbb{R} ^n $.

Before stating the next assumption, we need more notation. Let $\mathcal{U}$ denote the family of continuous functions $U:\mathbb{R}^n \times \mathbb{R}^n\rightarrow \mathbb{R}_+$ such that for any $b>0$, there exists a constant $\kappa_b >0$ satisfying
\begin{equation}\label{zs1}
U(x,\bar{x})\leq \kappa_b|x-\bar{x}|^2
\end{equation}
for any $x,\bar{x}\in \mathbb{R} ^n $ with $|x|\vee|\bar{x}|\leq b$. \\
\begin{ass}\label{a2}
There exist constants $K_2 >0,$ $\bar{\eta} > 2$ and $U\in \mathcal{U} $ such that
\begin{equation}
\begin{split}
&(x-\bar{x})^T (f(x,y)-f(\bar{x},\bar{y}))+\frac{\bar{\eta} -1}{2} |g(x,y)-g(\bar{x},\bar{y})|^2\\
&\leq K_2(|x-\bar{x}|^2+|y-\bar{y}|^2)-U(x,\bar{x})+U(y,\bar{y})
\end{split}
\end{equation}
for any $x,y,\bar{x},\bar{y} \in \mathbb{R} ^n$.
\end{ass}
\begin{rem}
We use an example to illustrate the necessity of setting $U(\cdot,\cdot)$. Let $f(x,y)=-5x^3+\frac{1}{8}|y|^{\frac{5}{4}}+2x$, $g(x,y)=\frac{1}{2}|x|^{\frac{3}{2}}+y$ for $x,y\in \mathbb{R}$. We could observe that there is no $K_2$ satisfying
\begin{equation*}
\begin{split}
(x-\bar{x})^T (f(x,y)-f(\bar{x},\bar{y}))+\frac{\bar{\eta} -1}{2} |g(x,y)-g(\bar{x},\bar{y})|^2\leq K_2(|x-\bar{x}|^2+|y-\bar{y}|^2),
\end{split}
\end{equation*}
but Assumption \ref{a2} is satisfied. The detailed proof will be provided in Section 5.
\end{rem}
\begin{ass}\label{a3}
There exist constants $K_3 >0$ and $\bar{p} > \bar{\eta} > 2$ such that
\begin{equation}
\begin{split}
x^T f(x,y) +\frac{\bar{p} -1}{2} |g(x,y)|^2
\leq K_3(1+|x|^2 +|y|^2)
\end{split}
\end{equation}
for any $x,y \in \mathbb{R} ^n $.
\end{ass}

By using the standard method, we could derive that the  moment of the true solution is bounded, i.e.
\begin{lem}\label{L4}
Let Assumption \ref{a1} and \ref{a3} hold. Then SDDEwPJs \eqref{dx1} has a unique global solution $x(t)$. In addition, for any $q\in[2,\bar{p})$,
\begin{equation}
\sup_{0\leq t\leq T} \mathbb{E}|x(t)|^q <\infty,~~~\forall T>0.
\end{equation}
\end{lem}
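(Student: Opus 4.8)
The plan is the classical Khasminskii-type argument: first produce a unique maximal local solution from the local Lipschitz property of the coefficients, and then use an a priori moment bound to simultaneously rule out explosion and deliver the stated estimate. Under Assumption \ref{a1}, on each ball $\{|x|\vee|y|\vee|\bar{x}|\vee|\bar{y}|\le R\}$ the factor $1+|x|^\beta+|y|^\beta+|\bar{x}|^\beta+|\bar{y}|^\beta$ is bounded by $1+4R^\beta$, so $f$ and $g$ are locally Lipschitz while $h$ is globally Lipschitz; I would truncate $f,g,h$ outside the ball of radius $R$ so that they become globally Lipschitz, apply the standard existence--uniqueness theorem for SDDEs with Poisson jumps in the globally Lipschitz case, and patch the solutions over increasing $R$. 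This yields a unique solution $x(t)$ on $[-\tau,\rho_\infty)$, where $\rho_R=\inf\{t\ge0:|x(t)|\ge R\}$ and $\rho_\infty=\lim_{R\to\infty}\rho_R$, and it then suffices to prove $\rho_\infty=\infty$ a.s.

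Next, fix $q\in[2,\bar{p})$ and $T>0$, set $V(x)=(1+|x|^2)^{q/2}$ (which is $C^2$), write $y(s)=x(s-\tau)$, and split $dN=d\widetilde{N}+\lambda\,dt$. Itô's formula for jump--diffusions, stopped at $\rho_R$, gives
\begin{equation*}
\begin{split}
\mathbb{E}V(x(t\wedge\rho_R)) = V(\xi(0)) &+ \mathbb{E}\int_0^{t\wedge\rho_R}\!\Big(\mathcal{L}_0 V(x(s),y(s)) \\
&\quad + \lambda\big[V(x(s)+h(x(s),y(s)))-V(x(s))\big]\Big)ds,
\end{split}
\end{equation*}
where $\mathcal{L}_0$ is the drift--diffusion part of the generator and the $dB$, $d\widetilde{N}$ integrals are zero-mean martingales. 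A direct computation, using $|g^Tx|^2\le|g|^2(1+|x|^2)$ to absorb the Hessian term, gives
\begin{equation*}
\mathcal{L}_0 V(x,y) \le q\,(1+|x|^2)^{q/2-1}\Big(x^Tf(x,y)+\tfrac{q-1}{2}|g(x,y)|^2\Big).
\end{equation*}
Since $q<\bar{p}$ and $|g|^2\ge0$, Assumption \ref{a3} bounds the bracket by $K_3(1+|x|^2+|y|^2)$, and Young's inequality applied to $(1+|x|^2)^{q/2-1}|y|^2$ (with exponents $\tfrac{q}{q-2},\tfrac{q}{2}$ when $q>2$, trivial when $q=2$) then gives $\mathcal{L}_0 V(x,y)\le C(V(x)+V(y))$. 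For the jump term, the linear growth \eqref{j1} of $h$ yields $|x+h(x,y)|\le C(1+|x|+|y|)$, hence $V(x+h(x,y))\le C(V(x)+V(y))$, so the whole integrand is dominated by $C(V(x(s))+V(y(s)))$ with $C$ depending on $q,K_3,\lambda,T$ and the growth constants but \emph{not} on $R$.

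Finally, since $\xi$ is bounded, shifting the delayed integral gives $\mathbb{E}\int_0^{t\wedge\rho_R}V(y(s))\,ds\le C+\int_0^t\mathbb{E}V(x(s\wedge\rho_R))\,ds$, whence $\mathbb{E}V(x(t\wedge\rho_R))\le C\big(1+\int_0^t\mathbb{E}V(x(s\wedge\rho_R))\,ds\big)$ for $t\in[0,T]$ uniformly in $R$, and Gronwall's inequality gives $\sup_{0\le t\le T}\mathbb{E}V(x(t\wedge\rho_R))\le Ce^{CT}$. From $R^q\,\mathbb{P}(\rho_R\le T)\le\mathbb{E}V(x(T\wedge\rho_R))\le Ce^{CT}$ we get $\mathbb{P}(\rho_R\le T)\to0$ as $R\to\infty$; since $T$ is arbitrary, $\rho_\infty=\infty$ a.s. and the solution is global, while Fatou's lemma gives $\sup_{0\le t\le T}\mathbb{E}|x(t)|^q\le\sup_{0\le t\le T}\mathbb{E}V(x(t))\le Ce^{CT}<\infty$. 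The only mildly delicate points are that the compensator term $\lambda[V(x+h)-V(x)]$ must be controlled using only the linear growth of $h$ (no one-sided condition on $h$ being assumed) and that the delayed integral has to be shifted so that Gronwall closes with the same quantity; neither presents a real obstacle.
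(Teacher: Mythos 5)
Your proposal is correct and is precisely the ``standard method'' that the paper invokes for Lemma \ref{L4} without writing out any details: local Lipschitz continuity from Assumption \ref{a1} for the maximal local solution, It\^o's formula for $V(x)=(1+|x|^2)^{q/2}$ with Assumption \ref{a3} and the linear growth \eqref{j1} of $h$ controlling the generator and the jump compensator, a shift of the delayed integral, Gronwall, Chebyshev for non-explosion, and Fatou for the moment bound. No discrepancies to report, since the paper supplies no proof to compare against and your argument is the canonical one.
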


To our best knowledge, there are few results about the strong convergence of the super-linear SDDEwPJs.  On the other hand,  the truncated EM method developed in \cite{22} is a kind of useful tool to deal with the super-linear terms. Therefore, in this paper, we will show that the truncated EM solutions will converge to the true solution in $\mathcal{L}^p$($p >0$) sense.

To define the truncated EM scheme, we first choose a strictly increasing continuous function $\varphi (r) :\mathbb{R} _+ \rightarrow \mathbb{R}_+$ such that $\varphi (r) \to \infty$ as $r \rightarrow \infty$ and
\begin{equation}
\sup_{|x|\vee|y| \le r} \left ( |f(x,y)| \vee |g(x,y)| \right ) \le \varphi (r),~~~\forall r\geq 1.
\end{equation}
Let $\varphi ^ {-1}$ denote the inverse function of $\varphi$. Thus, $\varphi ^ {-1}$ is a strictly increasing continuous function from $[\varphi (1),\infty)$ to $\mathbb{R}_+$. Then, we choose $K_0 \geq (1\vee \varphi(1))$ and a strictly decreasing function $\alpha: (0,1] \rightarrow (0,\infty)$ such that
\begin{equation}\label{y3}
\lim_{\Delta \rightarrow 0} \alpha(\Delta) =\infty,~~\Delta ^ \frac {1}{4} \alpha(\Delta) \le K_0,~~~\forall \Delta\in (0,1].
\end{equation}
For example, we could choose
\begin{equation}
\alpha(\Delta) =K_0 \Delta ^{-\varepsilon},~~~\forall \Delta\in (0,1],
\end{equation}
for some $\varepsilon \in (0,1/4]$.
For a given step size $\Delta \in (0,1]$, define the truncated mapping $\pi _{\Delta}: \mathbb{R} ^n \rightarrow \mathbb{R} ^n$ by
\begin{equation}\label{zs6}
\pi _{\Delta}(x)=\left ( |x| \wedge \varphi ^ {-1} (\alpha(\Delta)) \right ) \frac {x}{|x|},
\end{equation}
where we let $\frac {x}{|x|} =0$ when $x=0$. It is easy to see that the truncated mapping $\pi _{\Delta}$ maps $x$ to itself when $|x| \leq\varphi ^ {-1} (\alpha(\Delta))$ and to
$\varphi ^ {-1} (\alpha(\Delta))  \frac {x}{|x|}$ when $|x| \geq\varphi ^ {-1} (\alpha(\Delta))$. We now define the truncated functions
\begin{equation}
f_\Delta (x,y)=f(\pi _{\Delta}(x),\pi _{\Delta}(y)),~~g_\Delta (x,y)=g(\pi _{\Delta}(x),\pi _{\Delta}(y)),
\end{equation}
for any $x,y \in \mathbb{R} ^n$.
By definition, we could easily find that
\begin{equation}\label{y1}
|f_\Delta (x,y)|\vee|g_\Delta (x,y)|\leq\varphi(\varphi ^ {-1} (\alpha(\Delta)))\leq \alpha(\Delta),~~~ \forall x,y \in \mathbb{R} ^n.
\end{equation}
Moreover, we could obtain for any $x,y \in \mathbb{R} ^n$ that
\begin{equation}\label{zs2}
|\pi _{\Delta}(x)| \leq |x|, ~~~~~~|\pi _{\Delta}(x)-\pi _{\Delta}(y)|\leq|x-y|,~~~ \forall x,y \in \mathbb{R} ^n.
\end{equation}
By  Assumption \ref{a1}, we derive that for any $x,y \in \mathbb{R} ^n$
\begin{equation}\label{zs3}
\begin{split}
&|f_\Delta(x,y)-f_\Delta(\bar{x},\bar{y})|\vee |g_\Delta(x,y)-g_\Delta(\bar{x},\bar{y})| \\
&\leq K_1(1+|x|^\beta +|y|^\beta +|\bar{x}|^\beta +|\bar{y}|^\beta)(|x-\bar{x}|+|y-\bar{y}|).
\end{split}
\end{equation}
If  Assumption \ref{a3} hold. Then, for any $\Delta \in (0,1],  x,y \in \mathbb{R} ^n$, one has that
\begin{equation}\label{y2}
\begin{split}
x^T f_\Delta(x,y) +\frac{\bar{p} -1}{2} |g_\Delta(x,y)|^2
\leq 3K_3\left(\frac{1}{\varphi ^{-1}(\alpha(1))}\vee 1\right )(1+|x|^2 +|y|^2).
\end{split}
\end{equation}

Let us now introduce the discrete-time truncated EM numerical scheme to approximate the true solution of \eqref{dx1}. For some positive integer $M$, we take step size $\Delta =\tau /M$. Obviously, when we choose $M$ sufficiently large, $\Delta$ will become sufficiently small. Define $t_k = k\Delta$ for $k=-M, -M+1, -M+2, \cdots, -1, 0, 1, 2, \cdots$. Set $X_\Delta(t_k)=\xi(t_k)$ for $k=-M, -M+1, -M+2, \cdots, -1, 0$ and then form
\begin{equation}\label{ls1}
\begin{split}
X_\Delta (t_{k+1})=&X_\Delta (t_{k})+f_\Delta (X_\Delta (t_{k}),X_\Delta (t_{k-M}))\Delta +g_\Delta (X_\Delta (t_{k}),X_\Delta (t_{k-M}))\Delta B_k \\
&+h(X_\Delta (t_{k}^-),X_\Delta (t_{k-M}^-))\Delta N_k
\end{split}
\end{equation}
for $k=0, 1, 2, \cdots$, where $\Delta B_k = B(t_{k+1})-B(t_k)$, $\Delta N_k = N(t_{k+1})-N(t_k)$. As usual, there are two kinds of the continuous-time truncated EM solutions. The first one is that:
\begin{equation}\label{ls2}
\bar{x} _\Delta (t)=\sum _{k=-M}^{\infty} X_\Delta (t_{k})\textbf{I}_{\left [ t_k, t_{k+1}\right )}(t),
\end{equation}
where every sample path is not continuous. The second one is defined as follows:
\begin{equation}\label{ls3}
\begin{split}
x _\Delta (t)=&\xi(0) + \int _0^t f_\Delta (\bar{x} _\Delta (s), \bar{x} _\Delta (s-\tau))ds + \int _0^t g_\Delta (\bar{x} _\Delta (s), \bar{x} _\Delta (s-\tau))dB(s)\\
&+\int _0^t h(\bar{x} _\Delta (s^-), \bar{x} _\Delta ((s-\tau)^-))dN(s),
\end{split}
\end{equation}
which the  sample path is continuous. It is easy to see $X_\Delta (t_{k})=\bar{x}_\Delta (t_k)=x_\Delta (t_k)$. Aditionally, $x_\Delta (t)$ is an It\^{o} process on $t\geq 0$ with its It\^{o} differential:
\begin{equation}
\begin{split}
dx _\Delta (t)=&f_\Delta (\bar{x} _\Delta (t),\bar{x} _\Delta (t-\tau))dt + g_\Delta (\bar{x} _\Delta (t),\bar{x} _\Delta (t-\tau))dB(t)\\
&+h(\bar{x} _\Delta (t^-), \bar{x} _\Delta ((t-\tau)^-))dN(t).
\end{split}
\end{equation}

We now prepare some useful lemmas.
Before stating the next lemma,  we define
\begin{equation*}
\kappa(t)=\lfloor t/\Delta\rfloor \Delta,~~~\forall -\tau \leq t \leq T,
\end{equation*}
where $\lfloor \cdot\rfloor$ is the integer part of a number.
\begin{lem}\label{L6}
For any $\Delta \in (0,1] $ and $t\in [0,T]$, we have
\begin{equation}\label{3.26}
\begin{split}
&\mathbb{E} \left ( |x_\Delta (t)-\bar{x}_\Delta (t)|^{\tilde{p}} \big|\mathcal{F}_{\kappa(t)}\right )\\
&\leq c_{\tilde{p}} \left((\alpha(\Delta))^{\tilde{p}}\Delta ^{\frac{\tilde{p}}{2}} +\Delta\right)(1+\mathbb{E}|\bar{x}_\Delta (t)|^{\tilde{p}}+\mathbb{E}|\bar{x}_\Delta (t-\tau)|^{\tilde{p}}),~~~\tilde{p}\geq 2,
\end{split}
\end{equation}
and
\begin{equation}\label{3.27}
\begin{split}
&\mathbb{E} \left ( |x_\Delta (t)-\bar{x}_\Delta (t)|^{\tilde{p}} \big|\mathcal{F}_{\kappa(t)}\right )\\
&\leq c_{\tilde{p}} (\alpha(\Delta))^{\tilde{p}}\Delta ^{\frac{\tilde{p}}{2}} (1+\mathbb{E}|\bar{x}_\Delta (t)|^{\tilde{p}}+\mathbb{E}|\bar{x}_\Delta (t-\tau)|^{\tilde{p}}),~~~0<\tilde{p}< 2,
\end{split}
\end{equation}
where $c_{\tilde{p}}$ is a positive constant which is independent of $\Delta$.
\end{lem}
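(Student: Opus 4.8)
The plan is to write $x_\Delta(t)-\bar x_\Delta(t)$ explicitly on the subinterval containing $t$ and then estimate the moments of the Gaussian and Poisson increments that appear. Fix $t\in[0,T]$ and set $k=\lfloor t/\Delta\rfloor$, so that $\kappa(t)=t_k$ and $t\in[t_k,t_{k+1})$. Because $\tau=M\Delta$, for every $s\in[t_k,t_{k+1})$ one has $s-\tau\in[t_{k-M},t_{k-M+1})$, hence $\bar x_\Delta(s)=X_\Delta(t_k)=\bar x_\Delta(t)$ and $\bar x_\Delta(s-\tau)=X_\Delta(t_{k-M})=\bar x_\Delta(t-\tau)$; since a Poisson process a.s.\ does not jump at the fixed time $t_k$, the left limits in \eqref{ls3} may be replaced by these same constants on $(t_k,t]$. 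Therefore, from \eqref{ls3} together with $x_\Delta(\kappa(t))=\bar x_\Delta(t)$, we obtain almost surely
\begin{equation*}
x_\Delta(t)-\bar x_\Delta(t)=f_\Delta\big(\bar x_\Delta(t),\bar x_\Delta(t-\tau)\big)\,(t-\kappa(t))+g_\Delta\big(\bar x_\Delta(t),\bar x_\Delta(t-\tau)\big)\,\big(B(t)-B(\kappa(t))\big)+h\big(\bar x_\Delta(t),\bar x_\Delta(t-\tau)\big)\,\big(N(t)-N(\kappa(t))\big).
\end{equation*}
Here the three coefficient values are $\mathcal F_{\kappa(t)}$-measurable, whereas $B(t)-B(\kappa(t))$ and $N(t)-N(\kappa(t))$ are independent of $\mathcal F_{\kappa(t)}$, with $B(t)-B(\kappa(t))\sim\mathcal N(0,(t-\kappa(t))I_m)$ and $N(t)-N(\kappa(t))\sim\mathrm{Poisson}\big(\lambda(t-\kappa(t))\big)$, and $0\le t-\kappa(t)<\Delta$.

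For $\tilde p\ge2$ I would raise this identity to the power $\tilde p$, apply $|a+b+c|^{\tilde p}\le3^{\tilde p-1}(|a|^{\tilde p}+|b|^{\tilde p}+|c|^{\tilde p})$, and take $\mathbb E(\,\cdot\mid\mathcal F_{\kappa(t)})$. The drift term is controlled by \eqref{y1} and $\Delta\le1$: $|f_\Delta|^{\tilde p}(t-\kappa(t))^{\tilde p}\le(\alpha(\Delta))^{\tilde p}\Delta^{\tilde p}\le(\alpha(\Delta))^{\tilde p}\Delta^{\tilde p/2}$. Pulling the $\mathcal F_{\kappa(t)}$-measurable factor out of the conditional expectation, the diffusion term equals $|g_\Delta|^{\tilde p}\,\mathbb E|B(t)-B(\kappa(t))|^{\tilde p}\le c_{\tilde p}(\alpha(\Delta))^{\tilde p}(t-\kappa(t))^{\tilde p/2}\le c_{\tilde p}(\alpha(\Delta))^{\tilde p}\Delta^{\tilde p/2}$, again by \eqref{y1} and the Gaussian moment formula. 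For the jump term, $h$ likewise comes out of the conditional expectation, an elementary bound for the $\tilde p$-th moment of a Poisson variable of mean $\mu\le\lambda$ gives $\mathbb E|N(t)-N(\kappa(t))|^{\tilde p}\le c_{\tilde p,\lambda}\,\lambda(t-\kappa(t))\le c_{\tilde p,\lambda}\,\lambda\Delta$, and $|h(\bar x_\Delta(t),\bar x_\Delta(t-\tau))|^{\tilde p}\le C(1+|\bar x_\Delta(t)|^{\tilde p}+|\bar x_\Delta(t-\tau)|^{\tilde p})$ by the linear growth estimate \eqref{j1}. Adding the three contributions and using $1+|\bar x_\Delta(t)|^{\tilde p}+|\bar x_\Delta(t-\tau)|^{\tilde p}\ge1$ to absorb the drift/diffusion part yields \eqref{3.26} (with $|\bar x_\Delta(\cdot)|^{\tilde p}$ in place of its expectation; the displayed form follows on applying an outer expectation, which is how the estimate is used in the sequel).

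For $0<\tilde p<2$ the argument is identical, except that $|a+b+c|^{\tilde p}\le3^{\tilde p-1}(|a|^{\tilde p}+|b|^{\tilde p}+|c|^{\tilde p})$ is replaced by subadditivity when $\tilde p\le1$, and the jump contribution is now absorbed into the $(\alpha(\Delta))^{\tilde p}\Delta^{\tilde p/2}$ scale: since $\Delta\le1$ and $\tilde p/2\le1$ we have $\Delta\le\Delta^{\tilde p/2}$, and since $\alpha$ is decreasing we have $(\alpha(\Delta))^{\tilde p}\ge(\alpha(1)\wedge1)^{\tilde p}>0$, so $\Delta\le(\alpha(1)\wedge1)^{-\tilde p}(\alpha(\Delta))^{\tilde p}\Delta^{\tilde p/2}$; the drift and diffusion terms already carry the factor $(\alpha(\Delta))^{\tilde p}\Delta^{\tilde p/2}$, so no additive $\Delta$ remains and \eqref{3.27} follows. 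I expect the only genuinely non-mechanical step to be the first one — verifying that on $[t_k,t_{k+1})$ the integrands in \eqref{ls3} are frozen at their values at time $t$ (this is where $\tau=M\Delta$ and the right-continuity of $\bar x_\Delta$ enter), and that the left limits in the jump integral are harmless because a Poisson process a.s.\ has no jump at a fixed time. Once that explicit representation of $x_\Delta(t)-\bar x_\Delta(t)$ is in hand, the lemma reduces to elementary moment bounds for Brownian and Poisson increments combined with \eqref{y1} and \eqref{j1}.
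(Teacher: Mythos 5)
Your proposal is correct and follows essentially the same route as the paper: both reduce the one-step increment to its drift, diffusion and jump parts (the integrands being frozen on $[\kappa(t),t]$), bound the first two by $(\alpha(\Delta))^{\tilde p}\Delta^{\tilde p/2}$ via \eqref{y1}, and control the jump part by the $\tilde p$-th moment bound $\mathbb{E}|\Delta N_k|^{\tilde p}\le c\Delta$ together with the linear growth \eqref{j1} of $h$. The only (harmless) deviation is in the case $0<\tilde p<2$, where you rerun the argument with subadditivity and absorb the extra $\Delta$ using $\Delta\le(\alpha(1))^{-\tilde p}(\alpha(\Delta))^{\tilde p}\Delta^{\tilde p/2}$, whereas the paper deduces it from the $\tilde p=2$ case by Jensen's inequality; both are valid.
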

\begin{proof}
Fix any $\tilde{p}\geq 2$. Then by the H\"older inequality and the Burkholder-Davis-Gundy inequality, we derive from  \eqref{y1} that
\begin{equation*}
\begin{split}
&\mathbb{E} \left ( |x_\Delta (t)-\bar{x}_\Delta (t)|^{\tilde{p}} \big|\mathcal{F}_{\kappa(t)}\right )\\
&=\mathbb{E} \Big (\big|\int _{\kappa(t)}^t f_\Delta (\bar{x} _\Delta (s), \bar{x} _\Delta (s-\tau))ds + \int _{\kappa(t)}^t g_\Delta (\bar{x} _\Delta (s), \bar{x} _\Delta (s-\tau))dB(s)\\
&~~~+\int _{\kappa(t)}^t h(\bar{x} _\Delta (s^-), \bar{x} _\Delta ((s-\tau)^-))dN(s)\big|^{\tilde{p}} \Big|\mathcal{F}_{\kappa(t)}\Big )\\
&\leq c_{\tilde{p}}\Big ( \mathbb{E} \Big (\big |\int _{\kappa(t)}^t f_\Delta (\bar{x} _\Delta (s), \bar{x} _\Delta (s-\tau))ds \big|^{\tilde{p}} \Big|\mathcal{F}_{\kappa(t)}\Big )\\
&~~~+\mathbb{E} \Big (\big |\int _{\kappa(t)}^t g_\Delta (\bar{x} _\Delta (s), \bar{x} _\Delta (s-\tau))dB(s)\big |^{\tilde{p}} \Big|\mathcal{F}_{\kappa(t)}\Big )\\
&~~~+\mathbb{E} \Big ( \big|\int _{\kappa(t)}^t h(\bar{x} _\Delta (s^-), \bar{x} _\Delta ((s-\tau)^-))dN(s) \big|^{\tilde{p}} \Big|\mathcal{F}_{\kappa(t)}\Big ) \Big )\\
&\leq c_{\tilde{p}} \Big ( ( \alpha(\Delta))^{\tilde{p}}\Delta ^{\frac{\tilde{p}}{2}} +\mathbb{E} \Big ( \big|\int _{\kappa(t)}^t h(\bar{x} _\Delta (s), \bar{x} _\Delta (s-\tau))dN(s)\big |^{\tilde{p}} \Big|\mathcal{F}_{\kappa(t)}\Big ) \Big ). \\
\end{split}
\end{equation*}
By the characteristic function's argument \cite{6}, for $\Delta \in (0,1]$, we could get
\begin{equation}\label{np1}
\begin{split}
\mathbb{E} |\Delta N_k|^{\tilde{p}} \leq c_0 \Delta,
\end{split}
\end{equation}
where $c_0$ is a positive constant independent of $\Delta$. Then by \eqref{j1} we have
\begin{equation}
\begin{split}
&\mathbb{E} \Big ( \big |\int _{\kappa(t)}^t h(\bar{x} _\Delta (s), \bar{x} _\Delta (s-\tau))dN(s) \big |^{\tilde{p}} \Big|\mathcal{F}_{\kappa(t)}\Big )\\
&=\mathbb{E} \left( | h(x _\Delta (\kappa(t)), x_\Delta (\kappa(t-\tau)))\Delta N_k |^{\tilde{p}} \big|\mathcal{F}_{\kappa(t)}\right)\\
&\leq | h(x _\Delta (\kappa(t)), x_\Delta (\kappa(t-\tau)))|^{\tilde{p}}\mathbb{E} |\Delta N_k|^{\tilde{p}}\\
&\leq c_{\tilde{p}} (1+\mathbb{E}|\bar{x}_\Delta (t)|^{\tilde{p}}+\mathbb{E}|\bar{x}_\Delta (t-\tau)|^{\tilde{p}})\Delta.
\end{split}
\end{equation}
Thus we derive that
\begin{equation*}
\begin{split}
&\mathbb{E} \left ( |x_\Delta (t)-\bar{x}_\Delta (t)|^{\tilde{p}} \big|\mathcal{F}_{\kappa(t)}\right )\\
&\leq c_{\tilde{p}} \left ( ( \alpha(\Delta))^{\tilde{p}}\Delta ^{\frac{\tilde{p}}{2}} +(1+\mathbb{E}|\bar{x}_\Delta (t)|^{\tilde{p}}+\mathbb{E}|\bar{x}_\Delta (t-\tau)|^{\tilde{p}})\Delta \right )\\
&\leq c_{\tilde{p}} (( \alpha(\Delta))^{\tilde{p}}\Delta ^{\frac{\tilde{p}}{2}} +\Delta)(1+\mathbb{E}|\bar{x}_\Delta (t)|^{\tilde{p}}+\mathbb{E}|\bar{x}_\Delta (t-\tau)|^{\tilde{p}}).
\end{split}
\end{equation*}
When $0<\tilde{p}< 2$, an application of Jensen's inequality yields that
\begin{equation*}
\begin{split}
&\mathbb{E} \left ( |x_\Delta (t)-\bar{x}_\Delta (t)|^{\tilde{p}} \big|\mathcal{F}_{\kappa(t)}\right )\\
&\leq \left (\mathbb{E} \left ( |x_\Delta (t)-\bar{x}_\Delta (t)|^{2} \big|\mathcal{F}_{\kappa(t)}\right )\right )^{\frac{\tilde{p}}{2}}\\
&\leq c_{\tilde{p}} (( \alpha(\Delta))^{2}\Delta  +\Delta
)^{\frac{\tilde{p}}{2}}(1+\mathbb{E}|\bar{x}_\Delta (t)|^{2}+\mathbb{E}|\bar{x}_\Delta (t-\tau)|^{2})^{\frac{\tilde{p}}{2}}\\
&\leq c_{\tilde{p}} (\alpha(\Delta))^{\tilde{p}}\Delta ^{\frac{\tilde{p}}{2}} (1+\mathbb{E}|\bar{x}_\Delta (t)|^{\tilde{p}}+\mathbb{E}|\bar{x}_\Delta (t-\tau)|^{\tilde{p}}).
\end{split}
\end{equation*}
We complete the proof.
\end{proof}
\begin{lem}\label{L7}
Let Assumption \ref{a1} and \ref{a3} hold, then,  for any $2 < q < \bar{p} $,  we have
\begin{equation}
\sup _{0<\Delta \leq 1} \sup _{0\leq t \leq T}\mathbb{E} |x_\Delta (t)|^q  \leq C,~~~\forall  T>0.
\end{equation}
\end{lem}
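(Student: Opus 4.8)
The plan is to derive a Gronwall inequality for $\psi_\Delta(t):=\sup_{0\le u\le t}\mathbb{E}\,(1+|x_\Delta(u)|^2)^{q/2}$. Write $V(x)=(1+|x|^2)^{q/2}$, $\phi_\Delta(s)=1+|x_\Delta(s)|^2$, $\bar\phi_\Delta(s)=1+|\bar x_\Delta(s)|^2$ and $\delta_\Delta(s)=x_\Delta(s)-\bar x_\Delta(s)$. Since $x_\Delta$ is an It\^{o} process carrying the jump term $h\,dN$, I would apply It\^{o}'s formula to $V(x_\Delta(t))$; after localising by $\rho_R=\inf\{t\ge0:|x_\Delta(t)|\ge R\}$ (so that the stopped process is bounded, $\psi_\Delta$ is a priori finite, and the Brownian and compensated-Poisson integrals are genuine martingales), taking expectations, and letting $R\to\infty$ by Fatou, this yields
\[
\mathbb{E}V(x_\Delta(t))\le \mathbb{E}V(\xi(0))+\int_0^t\mathbb{E}\big[\mathcal{A}V(s)\big]\,ds,
\]
where $\mathcal{A}V(s)=V_x(x_\Delta(s))^{T}f_\Delta+\tfrac12\,\mathrm{tr}\!\big(g_\Delta^{T}V_{xx}(x_\Delta(s))g_\Delta\big)+\lambda\big(V(x_\Delta(s)+h_s)-V(x_\Delta(s))\big)$, with $f_\Delta=f_\Delta(\bar x_\Delta(s),\bar x_\Delta(s-\tau))$, $g_\Delta=g_\Delta(\bar x_\Delta(s),\bar x_\Delta(s-\tau))$ and $h_s=h(\bar x_\Delta(s),\bar x_\Delta(s-\tau))$. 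The proof then reduces to showing $\mathbb{E}[\mathcal{A}V(s)]\le C\big(1+\mathbb{E}\bar\phi_\Delta(s)^{q/2}+\mathbb{E}\bar\phi_\Delta(s-\tau)^{q/2}\big)$ for a constant $C$ independent of $\Delta\in(0,1]$ and of $s$.

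For the two continuous terms, an elementary computation (using $x^{T}bb^{T}x\le|x|^2|b|^2$ and $(1+|x|^2)^{(q-4)/2}|x|^2\le(1+|x|^2)^{(q-2)/2}$) gives $V_x(x)^{T}a+\tfrac12\mathrm{tr}(b^{T}V_{xx}(x)b)\le q(1+|x|^2)^{(q-2)/2}\big(x^{T}a+\tfrac{q-1}{2}|b|^2\big)$. With $a=f_\Delta$, $b=g_\Delta$, I would split $x_\Delta(s)^{T}f_\Delta=\bar x_\Delta(s)^{T}f_\Delta+\delta_\Delta(s)^{T}f_\Delta$; the ``matched'' part obeys $\bar x_\Delta(s)^{T}f_\Delta+\tfrac{q-1}{2}|g_\Delta|^2\le\bar x_\Delta(s)^{T}f_\Delta+\tfrac{\bar p-1}{2}|g_\Delta|^2\le C(\bar\phi_\Delta(s)+\bar\phi_\Delta(s-\tau))$ by \eqref{y2}, since $q<\bar p$. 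Multiplying by $\phi_\Delta(s)^{(q-2)/2}\le C\big(\bar\phi_\Delta(s)^{(q-2)/2}+|\delta_\Delta(s)|^{q-2}\big)$ (which follows from $\phi_\Delta(s)\le2\bar\phi_\Delta(s)+2|\delta_\Delta(s)|^2$), applying Young's inequality, and invoking Lemma~\ref{L6} — which bounds $\mathbb{E}|\delta_\Delta(s)|^{r}$ by $c_r\big((\alpha(\Delta))^{r}\Delta^{r/2}+\Delta\big)\big(1+\mathbb{E}|\bar x_\Delta(s)|^{r}+\mathbb{E}|\bar x_\Delta(s-\tau)|^{r}\big)$, hence by $C\big(1+\mathbb{E}\bar\phi_\Delta(s)^{q/2}+\mathbb{E}\bar\phi_\Delta(s-\tau)^{q/2}\big)$ whenever $r\le q$, because $\Delta^{1/4}\alpha(\Delta)\le K_0$ and $\Delta\le1$ — this contribution has the required form. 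The jump term is treated analogously via $V(x+h)-V(x)\le(2^{q-1}-1)V(x)+2^{q-1}|h|^q$ and $|h_s|\le C(1+|\bar x_\Delta(s)|+|\bar x_\Delta(s-\tau)|)$ from \eqref{j1}, followed by Young and Lemma~\ref{L6}.

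The term $\mathbb{E}\big[\phi_\Delta(s)^{(q-2)/2}\delta_\Delta(s)^{T}f_\Delta\big]$ is the main obstacle: $f_\Delta$ only obeys the crude bound $|f_\Delta|\le\alpha(\Delta)$ with $\alpha(\Delta)\to\infty$, so inserting $|\delta_\Delta(s)^{T}f_\Delta|\le\alpha(\Delta)|\delta_\Delta(s)|$ into Young's inequality produces $\alpha(\Delta)\,\mathbb{E}\bar\phi_\Delta(s)^{q/2}$, whose coefficient diverges as $\Delta\to0$. I would again split $\phi_\Delta(s)^{(q-2)/2}\le C\big(\bar\phi_\Delta(s)^{(q-2)/2}+|\delta_\Delta(s)|^{q-2}\big)$. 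The $|\delta_\Delta(s)|^{q-2}$-piece is $\le\alpha(\Delta)\,\mathbb{E}|\delta_\Delta(s)|^{q-1}$, which Lemma~\ref{L6} makes acceptable since $\alpha(\Delta)\big((\alpha(\Delta))^{q-1}\Delta^{(q-1)/2}+\Delta\big)\le K_0^{q}\Delta^{(q-2)/4}+K_0\Delta^{3/4}\le C$. For the piece with the $\mathcal{F}_{\kappa(s)}$-measurable weight $\bar\phi_\Delta(s)^{(q-2)/2}$, I would condition on $\mathcal{F}_{\kappa(s)}$: since $f_\Delta$, $h_s$ and $\bar x_\Delta$ are constant on $[\kappa(s),s)$ and $\mathcal{F}_{\kappa(s)}$-measurable, $\delta_\Delta(s)=(s-\kappa(s))f_\Delta+g_\Delta\big(B(s)-B(\kappa(s))\big)+h_s\big(N(s)-N(\kappa(s))\big)$, the Brownian increment averages out, and $\mathbb{E}\big[\delta_\Delta(s)\mid\mathcal{F}_{\kappa(s)}\big]=(s-\kappa(s))(f_\Delta+\lambda h_s)$. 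By the tower property this piece is $\mathbb{E}\big[\bar\phi_\Delta(s)^{(q-2)/2}(s-\kappa(s))\big(|f_\Delta|^2+\lambda h_s^{T}f_\Delta\big)\big]\le\Delta\,\mathbb{E}\big[\bar\phi_\Delta(s)^{(q-2)/2}\big((\alpha(\Delta))^2+\lambda\alpha(\Delta)|h_s|\big)\big]$, where the gained factor $\Delta$ together with $\Delta^{1/4}\alpha(\Delta)\le K_0$ keeps $\Delta(\alpha(\Delta))^2\le K_0^2\Delta^{1/2}$ and $\Delta\alpha(\Delta)\le K_0\Delta^{3/4}$ bounded; with \eqref{j1} and Young this is again $\le C\big(1+\mathbb{E}\bar\phi_\Delta(s)^{q/2}+\mathbb{E}\bar\phi_\Delta(s-\tau)^{q/2}\big)$.

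Collecting the three bounds gives $\mathbb{E}[\mathcal{A}V(s)]\le C\big(1+\mathbb{E}\bar\phi_\Delta(s)^{q/2}+\mathbb{E}\bar\phi_\Delta(s-\tau)^{q/2}\big)$. Since $\bar x_\Delta(s)=x_\Delta(\kappa(s))$ with $0\le\kappa(s)\le s$, one has $\mathbb{E}\bar\phi_\Delta(s)^{q/2}\le\psi_\Delta(s)$ and $\mathbb{E}\bar\phi_\Delta(s-\tau)^{q/2}\le C(1+\mathbb{E}\|\xi\|^{q})+\psi_\Delta(s)$ (using, when $s<\tau$, that $\bar x_\Delta(s-\tau)$ is a value of the bounded initial segment $\xi$). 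Substituting and taking the supremum over $t\le T$ yields $\psi_\Delta(T)\le C_1+C_2\int_0^T\psi_\Delta(s)\,ds$ with $C_1,C_2$ independent of $\Delta$ ($C_1$ absorbing $\mathbb{E}V(\xi(0))$ and $C(1+\mathbb{E}\|\xi\|^{q})T$, both finite because $\xi$ is bounded), so Gronwall's inequality gives $\sup_{0\le t\le T}\mathbb{E}V(x_\Delta(t))\le C_1e^{C_2T}$. As $|x_\Delta(t)|^q\le V(x_\Delta(t))$ and $C_1,C_2$ are independent of $\Delta$, the claim follows.
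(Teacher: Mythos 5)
Your overall architecture coincides with the paper's proof: It\^{o}'s formula for a $q$-th power Lyapunov function, the decomposition $x_\Delta^T(s)f_\Delta=\bar x_\Delta^T(s)f_\Delta+(x_\Delta(s)-\bar x_\Delta(s))^Tf_\Delta$ with \eqref{y2} absorbing the matched part (using $q<\bar p$), the weight split into a $|\bar x_\Delta(s)|^{q-2}$-piece and a $|x_\Delta(s)-\bar x_\Delta(s)|^{q-2}$-piece, Lemma \ref{L6} plus the calibration $\Delta^{1/4}\alpha(\Delta)\le K_0$ to tame the one-step increments, the linear growth \eqref{j1} for the jump term, and Gronwall. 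The matched part, the $|\delta_\Delta(s)|^{q-2}$-piece (your exponent arithmetic $(\alpha(\Delta))^{q}\Delta^{(q-1)/2}\le K_0^{q}\Delta^{(q-2)/4}$ is exactly the paper's), the jump term and the closure are all fine. The one place you deviate is the cross term with weight $\bar\phi_\Delta(s)^{(q-2)/2}$, and there both your diagnosis of the difficulty and your cure are off.

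On the diagnosis: the ``crude'' route you reject actually works. The paper bounds $|\delta_\Delta(s)^Tf_\Delta|\le\alpha(\Delta)|\delta_\Delta(s)|$, conditions on $\mathcal{F}_{\kappa(s)}$, and invokes the conditional first-moment bound \eqref{3.27} with $\tilde p=1$, which already carries a factor $\alpha(\Delta)\Delta^{1/2}$; the resulting prefactor is $(\alpha(\Delta))^{2}\Delta^{1/2}\le K_0^{2}$ by \eqref{y3}, and Young's inequality is applied only afterwards. The divergent coefficient $\alpha(\Delta)$ you fear appears only if Young is applied before the conditional moment of $|\delta_\Delta(s)|$ is extracted. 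On the cure: your replacement argument has a genuine gap. You multiply the one-sided inequality $\phi_\Delta(s)^{(q-2)/2}\le C\bigl(\bar\phi_\Delta(s)^{(q-2)/2}+|\delta_\Delta(s)|^{q-2}\bigr)$ by the signed quantity $\delta_\Delta(s)^Tf_\Delta$ and then apply the exact identity $\mathbb{E}[\delta_\Delta(s)\mid\mathcal{F}_{\kappa(s)}]=(s-\kappa(s))(f_\Delta+\lambda h_s)$ to the ``$\bar\phi_\Delta(s)^{(q-2)/2}$-piece''. On the event where $\delta_\Delta(s)^Tf_\Delta<0$ the multiplication reverses the inequality, so that piece is not an upper bound for anything; the two devices are incompatible, since the martingale cancellation requires keeping the sign while the weight split requires discarding it. The step is repairable (revert to the paper's estimate, or use the exact decomposition $\phi^{(q-2)/2}\delta^Tf=\bar\phi^{(q-2)/2}\delta^Tf+(\phi^{(q-2)/2}-\bar\phi^{(q-2)/2})\delta^Tf$ and control the second summand via $|\phi_\Delta(s)-\bar\phi_\Delta(s)|\le|\delta_\Delta(s)|\,(|x_\Delta(s)|+|\bar x_\Delta(s)|)$ together with the second conditional moment from Lemma \ref{L6}), but as written the argument does not go through.
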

\begin{proof}
For any $\Delta \in \left ( 0,1\right ]$ and $t > 0$, by It\^{o}'s formula and \eqref{y2} we derive that
\begin{equation}\label{A}
\begin{split}
&\mathbb{E} |x_\Delta (t)|^q\\
&\leq \|\xi\|^q + \mathbb{E} \int _0^t q |x_\Delta (s)|^{q-2}(x_\Delta ^T (s)f_\Delta (\bar{x}_\Delta (s),\bar{x}_\Delta (s-\tau))\\
&~~~+\frac{q-1}{2}|g_\Delta (\bar{x}_\Delta (s),\bar{x}_\Delta (s-\tau))|^2)ds\\
&~~~ + \lambda \mathbb{E}\int _0^{t}  (|x_\Delta(s^-)+h(\bar{x}_\Delta(s^-),\bar{x}_\Delta((s-\tau)^-))|^q-|x_\Delta(s^-)|^q)ds\\
&\leq \|\xi\|^q +\mathbb{E} \int _0^t 3q K_3([ 1/\varphi ^{-1}(\alpha(1))]\vee 1 )|x_\Delta (s)|^{q-2}(1+|\bar{x}_\Delta (s)|^2+|\bar{x}_\Delta (s-\tau)|^2)ds\\
&~~~+\mathbb{E} \int _0^t q |x_\Delta (s)|^{q-2}|x_\Delta(s)-\bar{x}_\Delta(s)|| f_\Delta (\bar{x}_\Delta (s),\bar{x}_\Delta (s-\tau))|ds\\
&~~~+\lambda \mathbb{E}\int _0^{t}  (|x_\Delta(s^-)+h(\bar{x}_\Delta(s^-),\bar{x}_\Delta((s-\tau)^-))|^q-|x_\Delta(s^-)|^q)ds\\
&=: \|\xi\|^q +A_1+A_2+A_3.
\end{split}
\end{equation}
We now handle $A_1$, $A_2$, $A_3$ respectively.
Firstly, we could see that
\begin{equation}\label{A1}
\begin{split}
A_1 &\leq
C\mathbb{E} \int _0^t(1+|x_\Delta (s)|^{q}+|\bar{x}_\Delta (s)|^q+|\bar{x}_\Delta (s-\tau)|^q)ds\\
&\leq C\left ( 1+\int _0^t\sup _{0\leq u \leq s}\mathbb{E}|x_\Delta (u)|^{q}ds\right ).
\end{split}
\end{equation}
Let
\begin{flalign}
~~~&A_{21} =
C \mathbb{E} \int _0^t |\bar{x}_\Delta (s)|^{q-2}|x_\Delta(s)-\bar{x}_\Delta(s)|| f_\Delta (\bar{x}_\Delta (s),\bar{x}_\Delta (s-\tau))|ds,&
\end{flalign}
and
\begin{flalign}
~~~&A_{22} =
C \mathbb{E} \int _0^t  |x_\Delta (s)-\bar{x}_\Delta (s)|^{q-2}|x_\Delta(s)-\bar{x}_\Delta(s)|| f_\Delta (\bar{x}_\Delta (s),\bar{x}_\Delta (s-\tau))|ds.&
\end{flalign}
Then we get
\begin{equation*}
\begin{split}
A_{2} \leq A_{21}+A_{22}.
\end{split}
\end{equation*}
By Lemma \ref{L6},  \eqref{y3}, \eqref{y1} and Young's inequality, we have
\begin{equation}\label{as5}
\begin{split}
A_{21}
&\leq C \int _0^t \mathbb{E}\left [|\bar{x}_\Delta (s)|^{q-2}| f_\Delta (\bar{x}_\Delta (s),\bar{x}_\Delta (s-\tau))| \mathbb{E}\big (|x_\Delta(s)-\bar{x}_\Delta(s)|\big|\mathcal{F}_{\kappa(s)}\big )\right ] ds\\
&\leq C \int _0^t (\alpha(\Delta))^2 \Delta ^{\frac{1}{2}}\mathbb{E}\big [|\bar{x}_\Delta (s)|^{q-2}
(1+|\bar{x}_\Delta (s)|+|\bar{x}_\Delta (s-\tau)|) \big ] ds\\
&\leq C \int _0^t (1+\mathbb{E}|\bar{x}_\Delta (s)|^q+\mathbb{E}|\bar{x}_\Delta (s-\tau)|^q) ds\\
&\leq C\left ( 1+\int _0^t\sup _{0\leq u \leq s}\mathbb{E}|x_\Delta (u)|^{q}ds\right ).
\end{split}
\end{equation}
By \eqref{y1}, we obtain that
\begin{equation}\label{as1}
\begin{split}
A_{22} \leq
C \alpha(\Delta) \int _0^t  \mathbb{E}|x_\Delta (s)-\bar{x}_\Delta (s)|^{q-1}ds.
\end{split}
\end{equation}
This, together with \eqref{y3} and Lemma \ref{L6}, implies
\begin{equation}\label{as7}
\begin{split}
A_{22} \leq C\left ( 1+\int _0^t\sup _{0\leq l \leq s}\mathbb{E}|x_\Delta (l)|^{q}ds\right ).
\end{split}
\end{equation}
Combing \eqref{as5} and \eqref{as7} together, we  derive that
\begin{equation}\label{A2}
\begin{split}
A_{2} \leq C\left ( 1+\int _0^t\sup _{0\leq l \leq s}\mathbb{E}|x_\Delta (l)|^{q}ds\right ).
\end{split}
\end{equation}
  By \eqref{j1} one can see that
\begin{equation}\label{A3}
\begin{split}
A_3 &\leq
C\mathbb{E} \int _0^t(1+|x_\Delta (s)|^{q}+|\bar{x}_\Delta (s)|^q+|\bar{x}_\Delta (s-\tau)|^q)ds\\
&\leq C\left ( 1+\int _0^t\sup _{0\leq u \leq s}\mathbb{E}|x_\Delta (u)|^{q}ds\right ).
\end{split}
\end{equation}
Substituting (\ref{A1}), (\ref{A2}) and (\ref{A3}) into (\ref{A}), we obtain
\begin{equation}
\begin{split}
\sup _{0\leq u \leq t}\mathbb{E}|x_\Delta (u)|^{q}\leq C\left ( 1+\int _0^t\sup _{0\leq u \leq s}\mathbb{E}|x_\Delta (u)|^{q}ds\right ).
\end{split}
\end{equation}
An application of Gronwall's inequality yields that
\begin{equation}
\begin{split}
\sup _{0\leq u \leq T}\mathbb{E}|x_\Delta (u)|^{q}\leq C,
\end{split}
\end{equation}
where $C$ is independent of $\Delta$. Since  this inequality holds for any $\Delta\in (0,1]$, the desired result follows.  The proof is therefore complete.
\end{proof}
\begin{lem}\label{L8}
Let Assumption \ref{a1} and \ref{a3} hold. Then for any $\Delta \in (0,1] $ and $t\in [0,T]$, we have
\begin{equation}\label{bs1}
\begin{split}
\mathbb{E}  |x_\Delta (t)-\bar{x}_\Delta (t)|^{q} \leq C \left((\alpha(\Delta))^{q}\Delta ^{\frac{q}{2}} +\Delta\right),~~~2\leq q < \bar{p},
\end{split}
\end{equation}
and
\begin{equation}\label{bs2}
\begin{split}
\mathbb{E} |x_\Delta (t)-\bar{x}_\Delta (t)|^{q}\leq C (\alpha(\Delta))^{q}\Delta ^{\frac{q}{2}} ,~~~0<q< 2.
\end{split}
\end{equation}
Hence,
\begin{equation}\label{bs3}
\lim _{\Delta \rightarrow 0} \mathbb{E} |x_\Delta (t)-\bar{x}_\Delta (t)|^q =0,~~~q>0.
\end{equation}
\end{lem}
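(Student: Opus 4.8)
The estimates \eqref{bs1} and \eqref{bs2} will follow by taking expectations in the conditional bounds \eqref{3.26} and \eqref{3.27} of Lemma \ref{L6}, whose right-hand sides are already deterministic, provided the factor $1+\mathbb{E}|\bar{x}_\Delta(t)|^{\tilde{p}}+\mathbb{E}|\bar{x}_\Delta(t-\tau)|^{\tilde{p}}$ can be bounded by a constant independent of $\Delta\in(0,1]$ and $t\in[0,T]$. So the first step is to establish such a uniform moment bound for the step process $\bar{x}_\Delta$. For $t\ge 0$ one has $\bar{x}_\Delta(t)=X_\Delta(\kappa(t))=x_\Delta(\kappa(t))$ with $0\le\kappa(t)\le t$, so Lemma \ref{L7} (which uses Assumptions \ref{a1} and \ref{a3}) gives $\sup_{0<\Delta\le 1}\sup_{0\le t\le T}\mathbb{E}|\bar{x}_\Delta(t)|^{q'}\le C$ for every $q'\in(2,\bar{p})$, and Jensen's inequality $\mathbb{E}|\bar{x}_\Delta(t)|^{q}\le(\mathbb{E}|\bar{x}_\Delta(t)|^{q'})^{q/q'}$ extends this to the endpoint $q=2$ and to all $q\in(0,2)$. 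For $s\in[-\tau,0)$ the value $\bar{x}_\Delta(s)$ is $\xi$ evaluated at a grid point, hence $|\bar{x}_\Delta(s)|\le\|\xi\|$ and $\mathbb{E}|\bar{x}_\Delta(s)|^{q}\le\mathbb{E}\|\xi\|^{q}<\infty$ by \eqref{dx2}. Combining these, for each fixed $q\in(0,\bar{p})$ there is a constant $C=C(q,T)$, independent of $\Delta$, with
\begin{equation*}
1+\mathbb{E}|\bar{x}_\Delta(t)|^{q}+\mathbb{E}|\bar{x}_\Delta(t-\tau)|^{q}\le C,\qquad\Delta\in(0,1],\ \ t\in[0,T].
\end{equation*}

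With this in hand, for $2\le q<\bar{p}$ I would take $\mathbb{E}$ on both sides of \eqref{3.26} with $\tilde{p}=q$ and use the displayed bound to get
\begin{align*}
\mathbb{E}|x_\Delta(t)-\bar{x}_\Delta(t)|^{q}
&\le c_{q}\bigl((\alpha(\Delta))^{q}\Delta^{q/2}+\Delta\bigr)\bigl(1+\mathbb{E}|\bar{x}_\Delta(t)|^{q}+\mathbb{E}|\bar{x}_\Delta(t-\tau)|^{q}\bigr)\\
&\le C\bigl((\alpha(\Delta))^{q}\Delta^{q/2}+\Delta\bigr),
\end{align*}
which is \eqref{bs1}. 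For $0<q<2$ the identical computation applied to \eqref{3.27} (which carries no additive $\Delta$ term) yields \eqref{bs2}.

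Finally, \eqref{bs3} is an immediate consequence of \eqref{bs1}--\eqref{bs2}: by \eqref{y3} we have $\Delta^{1/4}\alpha(\Delta)\le K_0$, hence $(\alpha(\Delta))^{q}\Delta^{q/2}\le K_0^{q}\Delta^{q/4}\to 0$ as $\Delta\to 0$, and of course $\Delta\to 0$; invoking \eqref{bs1} when $q\ge 2$ and \eqref{bs2} when $0<q<2$ gives $\lim_{\Delta\to 0}\mathbb{E}|x_\Delta(t)-\bar{x}_\Delta(t)|^{q}=0$ for every $q>0$. I do not foresee a genuine obstacle: the argument reduces entirely to Lemmas \ref{L6} and \ref{L7}, and the only mildly delicate points are passing from the conditional to the unconditional estimate and dealing with the delayed argument $\bar{x}_\Delta(t-\tau)$ on the initial segment $[-\tau,0)$, where the moment bound comes from the boundedness of $\xi$ rather than from Lemma \ref{L7}.
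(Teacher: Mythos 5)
Your proposal is correct and follows essentially the same route as the paper: \eqref{bs1} is obtained by combining the conditional estimate \eqref{3.26} of Lemma \ref{L6} with the uniform moment bound of Lemma \ref{L7}, and \eqref{bs3} follows from $\Delta^{1/4}\alpha(\Delta)\le K_0$. The only cosmetic difference is in the case $0<q<2$, where the paper applies Jensen's inequality to the unconditional second moment from \eqref{bs1} rather than taking expectations in \eqref{3.27}; both yield the same bound, and your extra care at the endpoint $q=2$ and on the initial segment $[-\tau,0)$ fills in details the paper leaves implicit.
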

\begin{proof}
By Lemma \ref{L7} and (\ref{3.26}), we get (\ref{bs1}) when $2\leq q < \bar{p}$. Then for any $0<q< 2$, Jensen's inequality gives that
\begin{equation}
\begin{split}
&\mathbb{E} |x_\Delta (t)-\bar{x}_\Delta (t)|^{q}
\leq \left ( \mathbb{E} |x_\Delta (t)-\bar{x}_\Delta (t)|^{2} \right )^{\frac{q}{2}} \leq C \left((\alpha(\Delta))^{2}\Delta  +\Delta \right )^{\frac{q}{2}}
\leq C (\alpha(\Delta))^{q}\Delta ^{\frac{q}{2}}.
\end{split}
\end{equation}
We complete the proof.
\end{proof}
By using the Chebyshev inequality, Lemma \ref{L4} and Lemma \ref{L7}, we can immediately have the following lemma.
\begin{lem}\label{L9}
Let Assumption \ref{a1} and \ref{a3} hold. For any real number $L> \|\xi\|$, define the stopping time
\begin{equation}\label{bs4}
\tau _L= \inf \{ t\geq 0:|x(t)| \geq L \}\,  \mbox{ and } \, \tau _{\Delta,L}= \inf \{ t\geq 0:|x_\Delta (t)| \geq L \}.
\end{equation}
 Then we have
\begin{equation}\label{bs5}
\begin{split}
\mathbb{P}(\tau _L \leq T)\leq \frac{C}{L^2}\,  \mbox{ and } \, \mathbb{P}(\tau _{\Delta,L} \leq T)\leq \frac{C}{L^2}.
\end{split}
\end{equation}
\end{lem}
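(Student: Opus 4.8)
The plan is to deduce the estimates directly from the uniform moment bounds by evaluating Chebyshev's inequality at the stopped time $\tau_L\wedge T$ (respectively $\tau_{\Delta,L}\wedge T$). Start with the true solution. Since $x(\cdot)$ is right-continuous with left-hand limits, so is $|x(\cdot)|$, and hence on the event $\{\tau_L\le T\}$ one has $|x(\tau_L\wedge T)|=|x(\tau_L)|\ge L$: if $t_n\downarrow\tau_L$ with $|x(t_n)|\ge L$ then right-continuity gives $|x(\tau_L)|=\lim_n|x(t_n)|\ge L$, while the case $t_n=\tau_L$ is immediate. Therefore $L^2\mathbf{1}_{\{\tau_L\le T\}}\le|x(\tau_L\wedge T)|^2$, and taking expectations,
\[
\mathbb{P}(\tau_L\le T)\le\frac{1}{L^2}\,\mathbb{E}|x(\tau_L\wedge T)|^2 .
\]
It then remains to bound $\mathbb{E}|x(\tau_L\wedge T)|^2$ by a constant independent of $L$. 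This is exactly the byproduct of the argument behind Lemma \ref{L4}, run for the stopped process: applying It\^o's formula to $|x(t\wedge\tau_L)|^2$, controlling $2x^Tf+|g|^2=2(x^Tf+\tfrac12|g|^2)$ by Assumption \ref{a3} (using $\tfrac12\le\tfrac{\bar{p}-1}{2}$ since $\bar{p}>2$), estimating the jump contribution through $|x+h|^2-|x|^2\le|x|^2+2|h|^2$ together with the linear growth of $h$ in \eqref{j1}, and discarding the now genuine mean-zero martingale increments, one obtains
\[
\sup_{0\le u\le t}\mathbb{E}|x(u\wedge\tau_L)|^2\le C\Bigl(1+\int_0^t\sup_{0\le u\le s}\mathbb{E}|x(u\wedge\tau_L)|^2\,ds\Bigr),
\]
so that Gronwall's inequality yields $\sup_{0\le t\le T}\mathbb{E}|x(t\wedge\tau_L)|^2\le C$ with $C$ depending only on $T,\|\xi\|,K_3,\lambda$ and not on $L$. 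Setting $t=T$ gives $\mathbb{P}(\tau_L\le T)\le C/L^2$. (Equivalently, one may observe that the proof of Lemma \ref{L4}, carried out as usual with the Burkholder--Davis--Gundy inequality, already delivers $\mathbb{E}\sup_{0\le t\le T}|x(t)|^2\le C$, which dominates $\mathbb{E}|x(\tau_L\wedge T)|^2$ at once.)

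The estimate for $\tau_{\Delta,L}$ is entirely parallel. Since $x_\Delta(\cdot)$ is also right-continuous with left limits and $x_\Delta(0)=\xi(0)$, the same argument gives $|x_\Delta(\tau_{\Delta,L}\wedge T)|\ge L$ on $\{\tau_{\Delta,L}\le T\}$, whence $\mathbb{P}(\tau_{\Delta,L}\le T)\le L^{-2}\,\mathbb{E}|x_\Delta(\tau_{\Delta,L}\wedge T)|^2$. The required bound $\sup_{0<\Delta\le1}\mathbb{E}|x_\Delta(\tau_{\Delta,L}\wedge T)|^2\le C$ is obtained by repeating the computation in the proof of Lemma \ref{L7} for the stopped scheme: It\^o's formula for $|x_\Delta(t\wedge\tau_{\Delta,L})|^2$, the coercivity bound \eqref{y2} for $x_\Delta^Tf_\Delta+\tfrac12|g_\Delta|^2$ (after splitting $x_\Delta^Tf_\Delta(\bar x_\Delta,\cdot)$ into $\bar x_\Delta^Tf_\Delta(\bar x_\Delta,\cdot)$ plus a mismatch term), Lemma \ref{L6} to absorb the $x_\Delta-\bar x_\Delta$ mismatch, the linear growth \eqref{j1} of $h$ in the jump term, and finally Gronwall --- all constants being independent of both $\Delta$ and $L$. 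This gives $\mathbb{P}(\tau_{\Delta,L}\le T)\le C/L^2$ and completes the argument.

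The only point demanding care in an otherwise routine computation is that Lemmas \ref{L4} and \ref{L7} are stated at deterministic times, whereas here the second moment must be controlled at the random time $\tau_L\wedge T$ (respectively $\tau_{\Delta,L}\wedge T$); the remedy, as above, is to run the Gronwall estimates directly for the stopped processes, so that stopping before taking expectations turns the stochastic and compensated-Poisson integrals into mean-zero martingales, and then to check that each constant appearing in the Gronwall step is independent of $L$ and of $\Delta$. Everything else is a direct application of Chebyshev's inequality.
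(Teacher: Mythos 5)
Your proposal is correct, and it follows the same basic route the paper intends: Chebyshev's inequality combined with the second-moment estimates of Lemmas \ref{L4} and \ref{L7}. The paper gives no written proof beyond the sentence that the lemma follows ``immediately'' from these ingredients, and your write-up in fact supplies the one point that this one-liner glosses over: the event $\{\tau_L\le T\}$ concerns the running supremum of $|x(\cdot)|$, so the fixed-time bounds $\sup_{0\le t\le T}\mathbb{E}|x(t)|^2\le C$ of Lemmas \ref{L4} and \ref{L7} do not by themselves yield $\mathbb{P}(\tau_L\le T)\le C/L^2$ via Chebyshev. Your remedy --- evaluating at the stopped time, using right-continuity to get $|x(\tau_L\wedge T)|\ge L$ on $\{\tau_L\le T\}$, and rerunning the It\^{o}--Gronwall computation for the stopped process so that the martingale terms vanish and all constants are independent of $L$ and $\Delta$ --- is the standard and correct way to make the argument rigorous, and the alternative you mention (a Burkholder--Davis--Gundy bound on $\mathbb{E}\sup_{0\le t\le T}|x(t)|^2$) works equally well. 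So this is essentially the paper's approach, carried out with the care the paper omits; there is no gap.
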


Let us now discuss the rate of convergence  in $\mathcal{L}^p$($p \geq 2$) sense for the truncated EM solutions to the true solution. In order to avoid the details becoming more complicated, we only calculate the rate of convergence when $p=2$. 
\begin{thm}\label{thm1}
Let Assumption \ref{2.1}, \ref{a1}--\ref{a3} hold. Suppose that there exists a real number $q\in (2,\bar{p})$ such that
$
q>(1+\beta)\bar{\eta}.
$
Then,  for any $\Delta\in(0,1]$, we have
\begin{equation}\label{cs3}
\mathbb{E}|x(T)-x_{\Delta}(T)|^2 \leq C\left ( (\varphi ^{-1}(\alpha(\Delta)))^{2\beta +2 -q} +(\alpha(\Delta))^2 \Delta + \Delta^{\frac{q-2\beta}{q}}+ \Delta ^{2\gamma}
\right ),
\end{equation}
and
\begin{equation}\label{cs4}
\mathbb{E}|x(T)-\bar{x}_{\Delta}(T)|^2 \leq C\left ( (\varphi ^{-1}(\alpha(\Delta)))^{2\beta +2 -q} +(\alpha(\Delta))^2 \Delta + \Delta^{\frac{q-2\beta}{q}}+ \Delta ^{2\gamma}
\right ).
\end{equation}
In particular, let
\begin{equation}\label{cs5}
\varphi (r)=c^* r^{1+\beta},~\forall r\geq 1, \, \alpha(\Delta) =K_0 \Delta ^{-\varepsilon}\, \varepsilon \in (0,1/4],
\end{equation}
with $c^* =4K_1 +|f(0,0)|+|g(0,0)|$.
 Then it holds  for any $\Delta\in(0,1]$ that
\begin{equation}\label{cs7}
\mathbb{E}|x(T)-x_{\Delta}(T)|^2 \leq C\Delta^{[\frac{\varepsilon(q-2\beta -2)}{1+\beta}]\wedge[1-2\varepsilon]\wedge [\frac{q-2\beta}{q}]\wedge [2\gamma]
},
\end{equation}
and
\begin{equation}\label{cs8}
\mathbb{E}|x(T)-\bar{x}_{\Delta}(T)|^2 \leq C\Delta^{[\frac{\varepsilon(q-2\beta -2)}{1+\beta}]\wedge[1-2\varepsilon]\wedge [\frac{q-2\beta}{q}]\wedge [2\gamma]
}.
\end{equation}
\end{thm}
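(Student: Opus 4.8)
The plan is to control $\mathbb{E}|x(T)-x_\Delta(T)|^2$ by a stopping-time truncation argument that splits the error into a ``good'' event where both processes stay bounded and a ``bad'' event of small probability where we only use moment bounds. Concretely, fix $L>\|\xi\|$ and the stopping time $\rho_L = \tau_L \wedge \tau_{\Delta,L}$ from Lemma~\ref{L9}. Writing $e(t)=x(t)-x_\Delta(t)$, I would use the elementary inequality, valid for any $\delta>0$,
\begin{equation*}
\mathbb{E}|e(T)|^2 = \mathbb{E}\left(|e(T)|^2\mathbf{1}_{\{\rho_L>T\}}\right)+\mathbb{E}\left(|e(T)|^2\mathbf{1}_{\{\rho_L\leq T\}}\right)\leq \mathbb{E}|e(T\wedge\rho_L)|^2 + \frac{2\delta}{q}\,\mathbb{E}|e(T)|^q + \frac{q-2}{q\,\delta^{2/(q-2)}}\,\mathbb{P}(\rho_L\leq T),
\end{equation*}
where the Young-type splitting of the bad term uses that $\mathbb{E}|e(T)|^q$ is bounded by Lemma~\ref{L4} and Lemma~\ref{L7}, and $\mathbb{P}(\rho_L\leq T)\leq C/L^2$ by Lemma~\ref{L9}. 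Choosing $L = (\varphi^{-1}(\alpha(\Delta)))$ (or a comparable power) and optimizing $\delta$ will convert the bad term into the factor $(\varphi^{-1}(\alpha(\Delta)))^{2\beta+2-q}$ appearing in \eqref{cs3}; the exponent $q>(1+\beta)\bar\eta > 2\beta+2$ is exactly what makes this term vanish as $\Delta\to0$.

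For the ``good'' term $\mathbb{E}|e(t\wedge\rho_L)|^2$ I would apply It\^o's formula to $|e(t)|^2$ for the jump-diffusion $de(t) = \big(f(x(t),x(t-\tau))-f_\Delta(\bar x_\Delta(t),\bar x_\Delta(t-\tau))\big)dt + \big(g-g_\Delta\big)dB(t) + \big(h-h\big)dN(t)$, take expectations so the martingale parts drop, and then insert $f(\pi_\Delta(\cdot),\pi_\Delta(\cdot))$ as an intermediate term to separate the ``truncation-of-argument'' error $f(x_\Delta,\ldots)-f_\Delta(x_\Delta,\ldots)$ from the genuine stability estimate. On the set $t\leq\rho_L$, the truncation map acts as the identity once $\Delta$ is small enough that $\varphi^{-1}(\alpha(\Delta))>L$, so the bulk of the drift/diffusion terms is handled by the Khasminskii-type Assumption~\ref{a2}: the key cancellation
\begin{equation*}
(x-\bar x)^T(f(x,y)-f(\bar x,\bar y)) + \tfrac{\bar\eta-1}{2}|g(x,y)-g(\bar x,\bar y)|^2 \leq K_2(|x-\bar x|^2+|y-\bar y|^2) - U(x,\bar x)+U(y,\bar y)
\end{equation*}
lets the $-U(x,\bar x)+U(y,\bar y)$ telescoping terms integrate to something nonpositive (using $U\geq0$ and the delay structure, after shifting the $y$-integral by $\tau$ and bounding the initial segment by Assumption~\ref{2.1}), while the jump term is controlled linearly via the Lipschitz bound \eqref{j1} on $h$ and the identity $\lambda\mathbb{E}\int(|e+ (h-h)|^2-|e|^2)=0$ for the matching-$h$ contribution, leaving only cross terms that are absorbed. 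The remaining contributions are: (i) the one-step error $\mathbb{E}|x_\Delta(s)-\bar x_\Delta(s)|^2$ producing the $(\alpha(\Delta))^2\Delta+\Delta$ term via Lemma~\ref{L8}; (ii) the polynomial weight $(1+|x|^\beta+\cdots)$ times $|x_\Delta-\bar x_\Delta|$ from \eqref{zs3}, handled by H\"older with exponents tuned to $q$ to yield $\Delta^{(q-2\beta)/q}$; (iii) the initial-data H\"older term $\Delta^{2\gamma}$ from comparing $\bar x_\Delta(s-\tau)$ with $x(s-\tau)$ near $s\in[0,\tau]$. Collecting everything, Gr\"onwall's inequality on $\sup_{0\leq u\leq t}\mathbb{E}|e(u\wedge\rho_L)|^2$ closes the estimate and gives \eqref{cs3}; \eqref{cs4} follows from the triangle inequality and Lemma~\ref{L8}, and \eqref{cs7}--\eqref{cs8} by substituting the explicit $\varphi$ and $\alpha$ from \eqref{cs5} and reading off exponents (note $\varphi^{-1}(\alpha(\Delta)) = (K_0/c^*)^{1/(1+\beta)}\Delta^{-\varepsilon/(1+\beta)}$).

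The main obstacle I anticipate is the interplay between the stopping time $\rho_L$ and the delay: the argument $\bar x_\Delta(s-\tau)$ at time $s$ is only controlled on $\{s-\tau\leq\rho_L\}$, not automatically on $\{s\leq\rho_L\}$, so one must either enlarge the stopping event to include the delayed process history or iterate the estimate interval-by-interval on $[0,\tau],[\tau,2\tau],\dots$; keeping track of how the telescoping $U$-terms behave across these shifts, and ensuring the constant $C$ stays independent of $\Delta$ through the Gr\"onwall iteration, is the delicate bookkeeping. A secondary subtlety is verifying that on the good set the jump integrand error $h(\bar x_\Delta(s^-),\cdot)$ appearing in $x_\Delta$ versus $h(x(s^-),\cdot)$ in $x$ is genuinely Lipschitz-controlled with a constant not blowing up with $L$ — this is where Assumption~\ref{a1}'s \emph{global} Lipschitz hypothesis on $h$ (as opposed to the polynomially-weighted one on $f,g$) is essential, and it is worth flagging that the compensator term $\lambda t$ must be carried along so that the $dN(t)$ contributions are handled by the same It\^o expansion rather than separately.
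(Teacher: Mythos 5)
Your overall architecture matches the paper's: It\^o's formula for $|e_\Delta(t\wedge\rho)|^2$, insertion of intermediate terms to isolate the stability estimate from the discretisation errors, the Khasminskii-type Assumption~\ref{a2} with the $U$-telescoping reduced (after the change of variable $s\mapsto s-\tau$) to the initial segment $\int_{-\tau}^0 U(\xi(s),\xi(\kappa(s)))\,ds\le C\Delta^{2\gamma}$, the one-step error of Lemma~\ref{L8}, H\"older with exponent $q$ for the polynomial weights, and Gronwall. Your worry about iterating interval-by-interval in $\tau$ is unnecessary: the shift $s\mapsto s-\tau$ inside the single Gronwall estimate already telescopes the $U$-terms.

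The genuine gap is in how you produce the term $(\varphi^{-1}(\alpha(\Delta)))^{2\beta+2-q}$. You attribute it to the bad event $\{\rho_L\le T\}$: you make $\pi_\Delta$ act as the identity on the good event by requiring $\varphi^{-1}(\alpha(\Delta))>L$, which forces $L$ to stay finite (tied to $\Delta$), and then you pay for $\{\rho_L\le T\}$ via Young's inequality and Lemma~\ref{L9}. But with $\mathbb{P}(\rho_L\le T)\le C/L^2$ the optimal choice of $\delta$ in $C\delta+C\delta^{-2/(q-2)}L^{-2}$ gives only $CL^{-2(q-2)/q}$, which for $L=\varphi^{-1}(\alpha(\Delta))$ is far weaker than $(\varphi^{-1}(\alpha(\Delta)))^{2\beta+2-q}$ (in the paper's example $\beta=2$, $q=25$: exponent $-46/25$ versus $-19$), so the claimed rate \eqref{cs7} is not recovered. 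The paper does something different and sharper: it never assumes $\pi_\Delta$ is the identity on $[0,\rho]$; it keeps the truncation error $f(x_\Delta,\cdot)-f_\Delta(x_\Delta,\cdot)$ inside the It\^o/Gronwall estimate, bounds it by $(1+|x_\Delta|^{2\beta}+\cdots)\,|x_\Delta-\pi_\Delta(x_\Delta)|^2$, and applies H\"older followed by Chebyshev with the $q$-th moment on $\{|x_\Delta(s)|>\varphi^{-1}(\alpha(\Delta))\}$, which yields exactly $(\varphi^{-1}(\alpha(\Delta)))^{2\beta+2-q}$. Because the resulting Gronwall constant is independent of $L$, the paper then simply lets $L\to\infty$, so no bad-event cost is paid and the bound holds for every $\Delta\in(0,1]$. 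Your route could be repaired by sharpening Lemma~\ref{L9} to $\mathbb{P}(\tau_L\le T)\le C/L^q$ (available from Lemmas~\ref{L4} and~\ref{L7}), after which optimising $\delta$ gives $L^{2-q}\le(\varphi^{-1}(\alpha(\Delta)))^{2\beta+2-q}$; but as written the bad-event estimate is too lossy and the proof does not close. A minor further point: the jump contribution does not cancel as your ``$|e+(h-h)|^2-|e|^2=0$'' remark suggests; it must be estimated as in the paper's $I_2$ using the global Lipschitz bound on $h$, Assumption~\ref{2.1} and Lemma~\ref{L8}, producing the $(\alpha(\Delta))^2\Delta+\Delta^{2\gamma}$ contributions.
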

\begin{proof}
Let $e_\Delta(t)=x(t)-x_\Delta(t)$ for $t\geq 0$ and $\Delta\in(0,1]$. Define $\rho _{\Delta,L} = \tau _L \wedge \tau _{\Delta,L}$, that is
\begin{equation*}
\rho _{\Delta,L}= \inf \{ t\geq 0:|x(t)|\vee|x_\Delta (t)| \geq L \}.
\end{equation*}
We write $\rho _{\Delta,L}=\rho$ for simplicity. Noting that for $\bar{q}\in (2,\bar{\eta})$, we have $q>(1+\beta)\bar{q}$.
By It\^{o}'s formula, for any $0\leq t \leq T$ and $\Delta\in(0,1]$, we have
\begin{equation}\label{cs1}
\begin{split}
&\mathbb{E} |e_\Delta(t \wedge \rho)|^2 \\
&\leq \mathbb{E} \int _0^{t \wedge \rho} 2\Big (e_\Delta ^T (s)(f(x(s),x(s-\tau))-f_\Delta (\bar{x}_\Delta (s),\bar{x}_\Delta (s-\tau))) \\
&~~~ + \frac{1}{2}|g(x(s),x(s-\tau))- g_\Delta (\bar{x}_\Delta (s),\bar{x}_\Delta (s-\tau))|^2 \Big )ds\\
&~~~ + \lambda \mathbb{E}\int _0^{t\wedge \rho} \big (
|e_\Delta (s)+h(x(s^-),x((s-\tau)^-))\\
&~~~-h(\bar{x}_\Delta(s^-),\bar{x}_\Delta((s-\tau)^-))|^2
-|e_\Delta (s)|^2 \big )ds\\
&=: I_1 +I_2 .
\end{split}
\end{equation}
Firstly, we estimate $I_1$. Noting that
\begin{equation*}
\begin{split}
&\frac {1}{2} |g(x(s),x(s-\tau))- g_\Delta (\bar{x}_\Delta (s),\bar{x}_\Delta (s-\tau))|^2 \\
& \leq \frac {1}{2} \Big ( (\bar{q}-1)|g(x(s),x(s-\tau))- g(x_\Delta (s),x_\Delta (s-\tau))|^2\\
&~~~+ \frac{\bar{q}-1}{\bar{q}-2} |g(x_\Delta(s),x_\Delta (s-\tau))- g_\Delta(\bar{x}_\Delta (s),\bar{x}_\Delta (s-\tau))|^2 \Big )\\
&=\frac {\bar{q} -1}{2} |g(x(s),x(s-\tau))- g(x_\Delta (s),x_\Delta (s-\tau))|^2\\
&~~~+ \frac{\bar{q}-1}{2(\bar{q}-2) } |g(x_\Delta(s),x_\Delta (s-\tau))- g_\Delta(\bar{x}_\Delta (s),\bar{x}_\Delta (s-\tau))|^2. \\
\end{split}
\end{equation*}
Then we have
\begin{equation}
\begin{split}
I_1
&\leq \mathbb{E} \int _0^{t \wedge \rho} 2\Big (e_\Delta ^T (s)(f(x(s),x(s-\tau))-f(x_\Delta(s),x_\Delta (s-\tau))) \\
&~~~ + \frac {\bar{q} -1}{2} |g(x(s),x(s-\tau))- g(x_\Delta (s),x_\Delta (s-\tau))|^2 \Big )ds\\
&~~~ +\mathbb{E} \int _0^{t \wedge \rho} 2\Big (e_\Delta ^T (s)(f(x_\Delta(s),x_\Delta (s-\tau))-f_\Delta(\bar{x}_\Delta (s),\bar{x}_\Delta (s-\tau))) \\
&~~~+\frac{\bar{q}-1}{2(\bar{q}-2) } |g(x_\Delta(s),x_\Delta (s-\tau))- g_\Delta(\bar{x}_\Delta (s),\bar{x}_\Delta (s-\tau))|^2\Big )ds\\
&=:I_{11}+I_{12}.
\end{split}
\end{equation}
By Assumptions \ref{2.1}, \ref{a2} and \eqref{zs1}, we derive that
\begin{equation}
\begin{split}
I_{11}
&\leq \mathbb{E} \int _0^{t \wedge \rho} 2\Big [K_2 (|x(s)-x_\Delta (s)|^2+|x(s-\tau)-x_\Delta (s-\tau)|^2)\\
&~~~-U(x(s), x_\Delta (s))+U(x(s-\tau), x_\Delta (s-\tau)) \Big ]ds\\
&\leq 4 K_2 \mathbb{E} \int _0^{t \wedge \rho} |e_\Delta(s)|^2 ds +2K_2 \int _{-\tau}^{0} |\xi(s)-\xi(\kappa(s))|^2 ds\\
&~~~+2\mathbb{E} \int _0^{t \wedge \rho} \Big (-U(x(s), x_\Delta (s))+U(x(s-\tau), x_\Delta (s-\tau)) \Big )ds\\
&\leq 4 K_2 \int _0^{t} \mathbb{E}|e_\Delta(s\wedge \rho)|^2 ds +2\tau K_2 \bar{K}^2 \Delta^{2\gamma} +2\int _{-\tau}^{0} U(\xi(s), \xi(\kappa(s)))ds\\
&\leq 4 K_2 \int _0^{t} \mathbb{E}|e_\Delta(s\wedge \rho)|^2 ds +2\tau K_2 \bar{K}^2 \Delta^{2\gamma} +2\int _{-\tau}^{0} \kappa_b|\xi(s)-\xi(\kappa(s))|^2ds\\
&\leq 4 K_2 \int _0^{t} \mathbb{E}|e_\Delta(s\wedge \rho)|^2 ds +c_1 \Delta^{2\gamma},
\end{split}
\end{equation}
where $c_1=2\tau K_2 \bar{K}^2+2\tau \kappa_b \bar{K}^2$, $\kappa(s)$ is defined as before.
As for $I_{12}$, we have
\begin{equation}
\begin{split}
I_{12}
&\leq \mathbb{E} \int _0^{t \wedge \rho} 2\Big ( e_\Delta ^T (s)(f(x_\Delta(s),x_\Delta (s-\tau))-f_\Delta(x_\Delta (s),x_\Delta (s-\tau))) \\
&~~~+\frac{\bar{q}-1}{\bar{q}-2} |g(x_\Delta(s),x_\Delta (s-\tau))- g_\Delta(x_\Delta (s),x_\Delta (s-\tau))|^2\Big )ds\\
&~~~+ \mathbb{E} \int _0^{t \wedge \rho} 2\Big ( e_\Delta ^T (s)(f_\Delta(x_\Delta(s),x_\Delta (s-\tau))-f_\Delta(\bar{x}_\Delta (s),\bar{x}_\Delta (s-\tau))) \\
&~~~+\frac{\bar{q}-1}{\bar{q}-2} |g_\Delta(x_\Delta(s),x_\Delta (s-\tau))- g_\Delta(\bar{x}_\Delta (s),\bar{x}_\Delta (s-\tau))|^2\Big )ds\\
&=:I_{121}+I_{122} .
\end{split}
\end{equation}
By Young's inequality, Assumption \ref{a1} and \eqref{zs2}, we have
\begin{equation*}
\begin{split}
I_{121}
&\leq \mathbb{E} \int _0^{t \wedge \rho} \Big (|e_\Delta (s)|^2+|f(x_\Delta(s),x_\Delta (s-\tau))-f_\Delta(x_\Delta (s),x_\Delta (s-\tau))|^2 \\
&~~~+\frac{2(\bar{q}-1)}{\bar{q}-2} |g(x_\Delta(s),x_\Delta (s-\tau))- g_\Delta(x_\Delta (s),x_\Delta (s-\tau))|^2\Big )ds\\
&\leq \mathbb{E} \int _0^{t \wedge \rho} |e_\Delta (s)|^2 ds\\
&~~~+C\mathbb{E} \int _0^{T} \Big (|f(x_\Delta(s),x_\Delta (s-\tau))-f_\Delta(x_\Delta (s),x_\Delta (s-\tau))|^2 \\
&~~~+|g(x_\Delta(s),x_\Delta (s-\tau))- g_\Delta(x_\Delta (s),x_\Delta (s-\tau))|^2\Big )ds\\
&\leq \int _0^{t}\mathbb{E} |e_\Delta (s \wedge \rho)|^2 ds+C\mathbb{E} \int _0^{T} \Big [(1+|x_\Delta (s)|^{2\beta}+|x_\Delta (s-\tau)|^{2\beta}\\
&~~~+|\pi_\Delta (x_\Delta (s))|^{2\beta}+|\pi_\Delta(x_\Delta (s-\tau))|^{2\beta} )\\
&~~~(|x_\Delta (s)-\pi_\Delta (x_\Delta (s))|^{2}+|x_\Delta (s-\tau)-\pi_\Delta(x_\Delta (s-\tau))|^2)\Big ]ds\\
&\leq \int _0^{t}\mathbb{E} |e_\Delta (s \wedge \rho)|^2 ds+C \int _0^{T} \mathbb{E}\Big [(1+|x_\Delta (s)|^{2\beta}+|x_\Delta (s-\tau)|^{2\beta} )\\
&~~~(|x_\Delta (s)-\pi_\Delta (x_\Delta (s))|^{2}+|x_\Delta (s-\tau)-\pi_\Delta(x_\Delta (s-\tau))|^2)\Big ]ds. \\
\end{split}
\end{equation*}
Then by Chebyshev's inequality, H\"{o}lder's inequality and Lemma \ref{L7}, we get
\begin{equation}
\begin{split}
I_{121}
&\leq \int _0^{t}\mathbb{E} |e_\Delta (s \wedge \rho)|^2 ds+C \int _0^{T} \Big [\mathbb{E}(1+|x_\Delta (s)|^{q}+|x_\Delta (s-\tau)|^{q} )\Big ]^{\frac{2\beta}{q}}\\
&~~~\Big [\mathbb{E}(|x_\Delta (s)-\pi_\Delta (x_\Delta (s))|^{\frac{2q}{q-2\beta}}+|x_\Delta (s-\tau)-\pi_\Delta(x_\Delta (s-\tau))|^{\frac{2q}{q-2\beta}})\Big ]^{\frac{q-2\beta}{q}}ds\\
&\leq \int _0^{t}\mathbb{E} |e_\Delta (s \wedge \rho)|^2 ds+C \int _0^{T} \Big [\mathbb{E}|x_\Delta (s)-\pi_\Delta (x_\Delta (s))|^{\frac{2q}{q-2\beta}}\Big ]^{\frac{q-2\beta}{q}} ds\\
&~~~+C \int _{-\tau}^{0} \Big [\mathbb{E}|\xi(\kappa(s))-\pi_\Delta(\xi(\kappa(s)))|^{\frac{2q}{q-2\beta}}\Big ]^{\frac{q-2\beta}{q}} ds\\
&\leq \int _0^{t} \mathbb{E}|e_\Delta (s \wedge \rho)|^2 ds+C \int _0^{T} \Big [\mathbb{E}\left (\textbf{I}_{|x_\Delta (s)|>\varphi^{-1}(\alpha(\Delta))}|x_\Delta (s)|^{\frac{2q}{q-2\beta}}\right ) \Big ]^{\frac{q-2\beta}{q}} ds\\
&~~~+C \int _{-\tau}^{0} \Big [\mathbb{E}\left ( \textbf{I}_{|\xi(\kappa(s))|>\varphi^{-1}(\alpha(\Delta))}|\xi(\kappa(s))|^
{\frac{2q}{q-2\beta}}\right ) \Big ]^{\frac{q-2\beta}{q}} ds\\
&\leq \int _0^{t}\mathbb{E} |e_\Delta (s \wedge \rho)|^2 ds\\
&~~~+C \int _0^{T}\Big ( \left [ \mathbb{P}(|x_\Delta (s)|>\varphi^{-1}(\alpha(\Delta)))\right ]^{\frac{q-2\beta -2}{q-2\beta}} \left [ \mathbb{E}|x_\Delta (s)|^q \right ]^{\frac{2}{q-2\beta}}
\Big )^{\frac{q-2\beta}{q}} ds\\
&~~~+C \int _{-\tau}^{0}\Big ( \left [ \mathbb{P}(|\xi(\kappa(s))|>\varphi^{-1}(\alpha(\Delta)))\right ]^{\frac{q-2\beta -2}{q-2\beta}} \left [ \mathbb{E}|\xi(\kappa(s))|^q \right ]^{\frac{2}{q-2\beta}}
\Big )^{\frac{q-2\beta}{q}} ds\\
&\leq \int _0^{t}\mathbb{E} |e_\Delta (s \wedge \rho)|^2 ds+C \int _0^{T}\Big ( \frac{\mathbb{E}|x_\Delta (s)|^q}{(\varphi^{-1}(\alpha(\Delta)))^q}
\Big )^{\frac{q-2\beta-2}{q}} ds\\
&~~~+C \int _{-\tau}^{0}\Big ( \frac{\mathbb{E}|\xi(\kappa(s))|^q}{(\varphi^{-1}(\alpha(\Delta)))^q}
\Big )^{\frac{q-2\beta-2}{q}} ds\\
&\leq \int _0^{t} \mathbb{E}|e_\Delta (s \wedge \rho)|^2 ds+C (\varphi^{-1}(\alpha(\Delta)))^{2\beta +2 -q} .
\end{split}
\end{equation}

We can use the same technique to handle $I_{122}$. By Young's inequality, H\"{o}lder's inequality, Lemma \ref{L7}, Lemma \ref{L8}, \eqref{zs3} and the inequality $2q/(q-2\beta) \geq 2$, we obtain
\begin{equation}
\begin{split}
I_{122}
&\leq \mathbb{E} \int _0^{t \wedge \rho} \Big (|e_\Delta (s)|^2 +|f_\Delta(x_\Delta(s),x_\Delta (s-\tau))-f_\Delta(\bar{x}_\Delta (s),\bar{x}_\Delta (s-\tau))|^2 \\
&~~~+\frac{2(\bar{q}-1)}{\bar{q}-2} |g_\Delta(x_\Delta(s),x_\Delta (s-\tau))- g_\Delta(\bar{x}_\Delta (s),\bar{x}_\Delta (s-\tau))|^2\Big )ds\\
&\leq \mathbb{E} \int _0^{t \wedge \rho} |e_\Delta (s)|^2 ds\\
&~~~+C\mathbb{E} \int _0^{T} \Big (|f_\Delta(x_\Delta(s),x_\Delta (s-\tau))-f_\Delta(\bar{x}_\Delta (s),\bar{x}_\Delta (s-\tau))|^2 \\
&~~~+|g_\Delta(x_\Delta(s),x_\Delta (s-\tau))- g_\Delta(\bar{x}_\Delta (s),\bar{x}_\Delta (s-\tau))|^2\Big )ds\\
&\leq \int _0^{t}\mathbb{E} |e_\Delta (s \wedge \rho)|^2 ds\\
&~~~+C \int _0^{T} \mathbb{E}\Big [(1+|x_\Delta (s)|^{2\beta}+|x_\Delta (s-\tau)|^{2\beta}+|\bar{x}_\Delta (s)|^{2\beta}+|\bar{x}_\Delta (s-\tau)|^{2\beta} )\\
&~~~(|x_\Delta (s)-\bar{x}_\Delta (s)|^{2}+|x_\Delta (s-\tau)-\bar{x}_\Delta (s-\tau)|^2)\Big ]ds\\
&\leq \int _0^{t}\mathbb{E} |e_\Delta (s \wedge \rho)|^2 ds\\
&~~~+C \int _0^{T}\big [\mathbb{E}(1+|x_\Delta (s)|^{q}+|x_\Delta (s-\tau)|^{q}+|\bar{x}_\Delta (s)|^{q}+|\bar{x}_\Delta (s-\tau)|^{q} )\big ]^{\frac{2\beta}{q}}\\
&~~~\big [\mathbb{E}(|x_\Delta (s)-\bar{x}_\Delta (s)|^{\frac{2q}{q-2\beta}}+|x_\Delta (s-\tau)-\bar{x}_\Delta (s-\tau)|^{\frac{2q}{q-2\beta}})\big ]^{\frac{q- 2\beta}{q}}ds\\
&\leq \int _0^{t}\mathbb{E} |e_\Delta (s \wedge \rho)|^2 ds+C \int _0^{T}\big (\mathbb{E}|x_\Delta (s)-\bar{x}_\Delta (s)|^{\frac{2q}{q-2\beta}}\big )^{\frac{q- 2\beta}{q}}ds\\
&\leq \int _0^{t}\mathbb{E} |e_\Delta (s \wedge \rho)|^2 ds+C \int _0^{T}\big ( (\alpha(\Delta))^{\frac{2q}{q-2\beta}} \Delta ^{\frac{q}{q-2\beta}} + \Delta \big )^{\frac{q- 2\beta}{q}}ds\\
&\leq \int _0^{t}\mathbb{E} |e_\Delta (s \wedge \rho)|^2 ds+C\left ( (\alpha(\Delta))^2 \Delta + \Delta^{\frac{q-2\beta}{q}} \right ).
\end{split}
\end{equation}
Let us now estimate $I_2$. By Assumption \ref{a1}, we obtain
\begin{equation*}
\begin{split}
&~~~|x(s^-)-x_{\Delta}(s)+h(x(s^-),x((s-\tau)^-))-h(\bar{x}_\Delta(s),\bar{x}_\Delta(s-\tau))|^2\\
&\leq 2\big ( |x(s^-)-x_{\Delta}(s)|^2+|h(x(s^-),x((s-\tau)^-))-h(\bar{x}_\Delta(s),\bar{x}_\Delta(s-\tau))|^2\big )\\
&\leq 2\big ( |x(s^-)-x_{\Delta}(s)|^2 +2K_1^2(|x(s^-)-\bar{x}_{\Delta}(s)|^2+|x((s-\tau)^-)-\bar{x}_\Delta(s-\tau)|^2) \big )\\
&\leq 2\big ( |x(s^-)-x_{\Delta}(s)|^2 +4K_1^2(|x(s^-)-x_{\Delta}(s)|^2+|x_{\Delta}(s)-\bar{x}_{\Delta}(s)|^2\\
&~~~+|x((s-\tau)^-)-x_\Delta(s-\tau)|^2+|x_\Delta(s-\tau)-\bar{x}_\Delta(s-\tau)|^2) \big ). \\
\end{split}
\end{equation*}
Thus, using Assumption \ref{2.1} and Lemma \ref{L8}, we obtain
\begin{equation}\label{cs2}
\begin{split}
I_2&\leq \lambda \mathbb{E}\int _0^{t\wedge \rho}\big ( 2|x(s^-)-x_{\Delta}(s)|^2 +8K_1^2(|x(s^-)-x_{\Delta}(s)|^2+|x_{\Delta}(s)-\bar{x}_{\Delta}(s)|^2\\
&~~~~~~+|x((s-\tau)^-)-x_\Delta(s-\tau)|^2+|x_\Delta(s-\tau)-\bar{x}_\Delta(s-\tau)|^2)\\
&~~~~~~-|x(s^-)-x_{\Delta}(s)|^2 \big )ds\\
&\leq \lambda \mathbb{E}\int _0^{t\wedge \rho}\big ( 2|e_\Delta (s)|^2 + 16K_1^2(|e_\Delta (s)|^2+|x_{\Delta}(s)-\bar{x}_{\Delta}(s)|^2) -|e_\Delta (s)|^2 \big )ds\\
&~~~+\lambda \int _{-\tau}^0 8K_1^2|\xi(s)-\xi(\kappa(s))|^2 ds\\
&\leq \lambda(16K_1^2 +1)\mathbb{E}\int _0^{t\wedge \rho} |e_\Delta (s)|^2 ds +16\lambda K_1^2 \int _0^{T} \mathbb{E} |x_{\Delta}(s)-\bar{x}_{\Delta}(s)|^2 ds \\
&~~~+ 8\lambda\tau K_1^2 \bar{K}^2 \Delta ^{2\gamma}\\
&\leq \lambda(16K_1^2 +1)\int _0^{t} \mathbb{E}|e_\Delta (s\wedge \rho)|^2 ds +16\lambda K_1^2 T (\alpha(\Delta))^2 \Delta+ 8\lambda\tau K_1^2 \bar{K}^2 \Delta ^{2\gamma}.
\end{split}
\end{equation}
Combining \eqref{cs1} - \eqref{cs2} together, one can see that
\begin{equation*}
\begin{split}
&\mathbb{E}|e_\Delta (t \wedge \rho)|^2\\
&\leq C\left (\int _0^{t} \mathbb{E}|e_\Delta (s \wedge \rho)|^2 ds+ (\varphi^{-1}(\alpha(\Delta)))^{2\beta +2 -q} + (\alpha(\Delta))^2 \Delta + \Delta^{\frac{q-2\beta}{q}} + \Delta^{2\gamma}\right ).
\end{split}
\end{equation*}
An application of Gronwall's inequality yields that
\begin{equation*}
\begin{split}
\mathbb{E}|e_\Delta (T \wedge \rho)|^2
&\leq C\left ( (\varphi^{-1}(\alpha(\Delta)))^{2\beta +2 -q} + (\alpha(\Delta))^2 \Delta + \Delta^{\frac{q-2\beta}{q}} + \Delta^{2\gamma}\right ).
\end{split}
\end{equation*}
The desired results \eqref{cs3} and \eqref{cs4} follow by letting $L \rightarrow \infty$ and using Lemma \ref{L9}  and Lemma \ref{L8}.
In particular, by the definition of $varphi$, we can derive  \eqref{cs7} and \eqref{cs8}.   The proof is therefore complete.
\end{proof}

\section{Convergence  in $\mathcal{L}^p$ for $0< p < 2$}

In this section, we will discuss the convergence and the rate of the convergence  of the truncated EM method for \eqref{dx1} in $\mathcal{L}^p$ for $0<p <2$. To achieve this goal, we need to impose the following assumptions on coefficients.
\begin{ass}\label{a4.1}
There exists a positive constant $K_R$ such that
\begin{equation}
\begin{split}
|f(x,y)-f(\bar{x},\bar{y})|\vee |g(x,y)-g(\bar{x},\bar{y})|\vee |h(x,y)-h(\bar{x},\bar{y})| \leq K_R(|x-\bar{x}|+|y-\bar{y}|)
\end{split}
\end{equation}
for any $x,y,\bar{x},\bar{y} \in \mathbb{R} ^n $ with $|x|\vee |y|\vee|\bar{x}|\vee|\bar{y}|\leq R$.
\end{ass}
\begin{ass}\label{a4.2}
There exist constants $K_5 >0$, $K_6 \geq 0$ and $\sigma > 2$ such that
\begin{equation}
\begin{split}
&2x^T f(x,y) +|g(x,y)|^2 +\lambda (2x^T h(x,y) +|h(x,y)|^2)\\
&\leq K_5(1+|x|^2 +|y|^2)-K_6|x|^\sigma+K_6|y|^\sigma
\end{split}
\end{equation}
for any $x,y \in \mathbb{R} ^n $.
\end{ass}
We could get the following lemma in the similar way as Lemma \ref{L4} was proved.
\begin{lem}\label{BL3}
Let Assumption \ref{a4.1} and \ref{a4.2} hold. Then SDDEwPJs \eqref{dx1} has a unique global solution $x(t)$ which satisfies
\begin{equation}
\sup_{0\leq t\leq T} \mathbb{E}|x(t)|^2 <\infty,~~~\forall T>0.
\end{equation}
\end{lem}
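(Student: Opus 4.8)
The plan is to follow the classical Khasminskii truncation scheme, exactly as in the proof of Lemma \ref{L4}. Since Assumption \ref{a4.1} gives a local Lipschitz condition on $f$, $g$ and $h$, the standard existence-and-uniqueness theory for stochastic differential delay equations with Poisson jumps yields a unique maximal local solution $x(t)$ on $[-\tau,\tau_e)$, where $\tau_e$ denotes the explosion time. For each integer $k>\|\xi\|$ set the stopping time $\tau_k=\inf\{t\ge0:|x(t)|\ge k\}$; then $\tau_k$ is nondecreasing and $\tau_k\uparrow\tau_e$ as $k\to\infty$. It therefore suffices to establish a bound on $\mathbb{E}|x(t\wedge\tau_k)|^2$ that is uniform in $k$, since this will simultaneously force $\tau_e=\infty$ a.s. (so the solution is global) and give the asserted moment estimate via Fatou's lemma.

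First I would apply It\^o's formula for jump-diffusions to $|x(t\wedge\tau_k)|^2$. Using that $|x+h|^2-|x|^2=2x^Th+|h|^2$ for the jump part, and that the Brownian integral and the compensated Poisson integral are martingales (hence have zero mean after stopping at $\tau_k$), one obtains
\begin{equation*}
\mathbb{E}|x(t\wedge\tau_k)|^2=\mathbb{E}|\xi(0)|^2+\mathbb{E}\int_0^{t\wedge\tau_k}\Big(2x^T(s)f(x(s),x(s-\tau))+|g(x(s),x(s-\tau))|^2+\lambda\big(2x^T(s)h(x(s),x(s-\tau))+|h(x(s),x(s-\tau))|^2\big)\Big)ds.
\end{equation*}
Assumption \ref{a4.2} then bounds the integrand above by $K_5(1+|x(s)|^2+|x(s-\tau)|^2)-K_6|x(s)|^\sigma+K_6|x(s-\tau)|^\sigma$.

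The key manipulation is to absorb the delayed terms. For the superlinear part, substituting $u=s-\tau$ gives $\int_0^{t\wedge\tau_k}K_6|x(s-\tau)|^\sigma ds=\int_{-\tau}^{t\wedge\tau_k-\tau}K_6|x(u)|^\sigma du\le \tau K_6\|\xi\|^\sigma+\int_0^{t\wedge\tau_k}K_6|x(u)|^\sigma du$, because $x(u)=\xi(u)$ on $[-\tau,0]$ and the integrand is nonnegative; hence the term $+K_6|x(s-\tau)|^\sigma$ is exactly cancelled pathwise by $-K_6|x(s)|^\sigma$ up to the constant $\tau K_6\|\xi\|^\sigma$, so the $\sigma$-moment never actually needs to be controlled. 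The same shift applied to $K_5|x(s-\tau)|^2$ yields $\mathbb{E}\int_0^{t\wedge\tau_k}K_5|x(s-\tau)|^2ds\le \tau K_5\|\xi\|^2+\int_0^tK_5\mathbb{E}|x(s\wedge\tau_k)|^2ds$. Collecting all the estimates we arrive at
\begin{equation*}
\mathbb{E}|x(t\wedge\tau_k)|^2\le C_T+C\int_0^t\mathbb{E}|x(s\wedge\tau_k)|^2ds,\qquad 0\le t\le T,
\end{equation*}
where $C_T$ and $C$ depend only on $T,\lambda,K_5,K_6,\tau,\|\xi\|$ and not on $k$. Gronwall's inequality then gives $\mathbb{E}|x(t\wedge\tau_k)|^2\le C_Te^{CT}$ for all $k$ and all $t\in[0,T]$.

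Finally, from $k^2\,\mathbb{P}(\tau_k\le T)\le\mathbb{E}|x(T\wedge\tau_k)|^2\le C_Te^{CT}$ we get $\mathbb{P}(\tau_k\le T)\to0$ as $k\to\infty$, so $\tau_e=\infty$ a.s.\ and the solution is global; letting $k\to\infty$ and applying Fatou's lemma yields $\sup_{0\le t\le T}\mathbb{E}|x(t)|^2\le C_Te^{CT}<\infty$. I expect the only slightly delicate point to be the bookkeeping of the delay terms — observing that the structure $-K_6|x|^\sigma+K_6|y|^\sigma$ in Assumption \ref{a4.2} makes the superlinear contribution self-cancelling under the time shift, so that no a priori bound on $\mathbb{E}|x(t)|^\sigma$ is required — together with the routine check that the stopped Brownian and compensated-Poisson integrals have zero expectation, which legitimises dropping them before applying Gronwall's inequality.
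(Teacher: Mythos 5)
Your proof is correct and follows exactly the standard Khasminskii truncation--It\^o--Gronwall argument that the paper itself invokes (it omits the proof, stating only that the lemma follows ``in the similar way as Lemma \ref{L4}'', which in turn is justified ``by the standard method''). In particular, your observation that the $-K_6|x|^\sigma+K_6|y|^\sigma$ structure in Assumption \ref{a4.2} makes the superlinear contribution self-cancelling after the time shift $s\mapsto s-\tau$ is precisely the point of that assumption, and the rest of the bookkeeping (finiteness of $\mathbb{E}|x(t\wedge\tau_k)|^2$ for fixed $k$, vanishing expectation of the stopped martingale terms, Fatou) is handled appropriately.
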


In the previous section, the jump coefficient $h$ is linear growth, but in the assumptions \ref{a4.1} and \ref{a4.2} $h$ is  allowed to grow super-linearly. Thus, we need to truncate all the three coefficients. In the same way in Section 3,   we first choose a strictly increasing continuous function $\varphi (r) :\mathbb{R} _+ \rightarrow \mathbb{R}_+$ such that $\varphi (r) \to \infty$ as $r \rightarrow \infty$ and
\begin{equation}
\sup_{|x|\vee|y| \le r} \left ( |f(x,y)| \vee |g(x,y)| \vee |h(x,y)|\right ) \le \varphi (r),~~~\forall r\geq 1.
\end{equation}
Choose $K_0$ and  $\alpha: (0,1] \rightarrow (0,\infty)$ as in \eqref{y3}.
For a given step size $\Delta \in (0,1]$, the truncated mapping $\pi _{\Delta}$ is defined as \eqref{zs6}, and  the truncated functions are define as follows:
\begin{equation*}\label{es2}
\begin{split}
&f_\Delta (x,y)=f(\pi _{\Delta}(x),\pi _{\Delta}(y)),~~g_\Delta (x,y)=g(\pi _{\Delta}(x),\pi _{\Delta}(y)), ~~
h_\Delta (x,y)=h(\pi _{\Delta}(x),\pi _{\Delta}(y)),
\end{split}
\end{equation*}
for any $x,y \in \mathbb{R} ^n$.
It is easy to see that
\begin{equation}\label{es3}
|f_\Delta (x,y)|\vee|g_\Delta (x,y)|\vee|h_\Delta (x,y)|\leq\varphi(\varphi ^ {-1} (\alpha(\Delta)))\leq \alpha(\Delta),~~~ \forall x,y \in \mathbb{R} ^n.
\end{equation}

Moreover, if Assumption \ref{a4.2} holds, then it holds for any $\Delta \in (0,1], x,y \in \mathbb{R} ^n$ that
\begin{equation}\label{BL4}
\begin{split}
&2x^T f_\Delta(x,y) +|g_\Delta(x,y)|^2 +\lambda (2x^T h_\Delta(x,y) +|h_\Delta(x,y)|^2)\\
&\leq 3K_5([ 1/\varphi ^{-1}(\alpha(1))]\vee 1 )(1+|x|^2 +|y|^2)
-K_6|\pi_\Delta(x)|^\sigma+K_6|\pi_\Delta(y)|^\sigma.
\end{split}
\end{equation}

Since $|h_\Delta (x,y)|\leq \alpha(\Delta)$, similar to the Lemma \ref{L6}, we have the following lemma.
\begin{lem}\label{BL5}
For any $\Delta \in (0,1] $ and $t\in [0,\infty]$, we have
\begin{equation}
\begin{split}
\mathbb{E} |x_\Delta (t)-\bar{x}_\Delta (t)|^{\tilde{p}}\leq c_{\tilde{p}} (\alpha(\Delta))^{\tilde{p}}\Delta ,~~~\tilde{p}\geq 2,
\end{split}
\end{equation}
and
\begin{equation}
\begin{split}
\mathbb{E}  |x_\Delta (t)-\bar{x}_\Delta (t)|^{\tilde{p}} \leq c_{\tilde{p}} (\alpha(\Delta))^{\tilde{p}}\Delta ^{\frac{\tilde{p}}{2}},~~~0<\tilde{p}< 2,
\end{split}
\end{equation}
where $c_{\tilde{p}}$ is a positive constant which is independent of $\Delta$. As a result,
\begin{equation}
\begin{split}
\lim_{\Delta \rightarrow 0}\mathbb{E}  |x_\Delta (t)-\bar{x}_\Delta (t)|^{\tilde{p}} =0,~~~\tilde{p}>0.
\end{split}
\end{equation}
\end{lem}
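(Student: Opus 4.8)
The plan is to follow the proof of Lemma~\ref{L6} almost verbatim, the essential simplification being that in the present setting the jump coefficient is truncated as well, so that by \eqref{es3} all three truncated coefficients $f_\Delta,g_\Delta,h_\Delta$ are bounded by $\alpha(\Delta)$ uniformly in their arguments. Consequently no conditioning on $\mathcal{F}_{\kappa(t)}$ is needed and the state-dependent factors $1+\mathbb{E}|\bar x_\Delta(t)|^{\tilde p}+\mathbb{E}|\bar x_\Delta(t-\tau)|^{\tilde p}$ appearing in \eqref{3.26}--\eqref{3.27} disappear. Fix $\tilde p\ge 2$ and $t\ge 0$. From the continuous-time scheme (the analogue of \eqref{ls3} with $h$ replaced by $h_\Delta$) and the identity $\bar x_\Delta(t)=x_\Delta(\kappa(t))$ we have
\[
x_\Delta(t)-\bar x_\Delta(t)=\int_{\kappa(t)}^t f_\Delta(\bar x_\Delta(s),\bar x_\Delta(s-\tau))\,ds+\int_{\kappa(t)}^t g_\Delta(\bar x_\Delta(s),\bar x_\Delta(s-\tau))\,dB(s)+\int_{\kappa(t)}^t h_\Delta(\bar x_\Delta(s^-),\bar x_\Delta((s-\tau)^-))\,dN(s),
\]
and I would apply $|a+b+c|^{\tilde p}\le 3^{\tilde p-1}(|a|^{\tilde p}+|b|^{\tilde p}+|c|^{\tilde p})$ and treat the three moments in turn.

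For the drift term, \eqref{es3} gives the pathwise bound $\big|\int_{\kappa(t)}^t f_\Delta\,ds\big|\le\alpha(\Delta)(t-\kappa(t))\le\alpha(\Delta)\Delta$, so its $\tilde p$-th moment is at most $(\alpha(\Delta))^{\tilde p}\Delta^{\tilde p}\le(\alpha(\Delta))^{\tilde p}\Delta$ since $\Delta\le 1$ and $\tilde p\ge 1$. For the stochastic integral, the Burkholder--Davis--Gundy inequality and \eqref{es3} give $\mathbb{E}\big|\int_{\kappa(t)}^t g_\Delta\,dB(s)\big|^{\tilde p}\le C_{\tilde p}\,\mathbb{E}\big(\int_{\kappa(t)}^t|g_\Delta|^2\,ds\big)^{\tilde p/2}\le C_{\tilde p}(\alpha(\Delta))^{\tilde p}\Delta^{\tilde p/2}\le C_{\tilde p}(\alpha(\Delta))^{\tilde p}\Delta$, again because $\tilde p/2\ge 1$. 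For the jump term I would use that on $[\kappa(t),t]$ the integrand is constant and equal to $h_\Delta(x_\Delta(\kappa(t)),x_\Delta(\kappa(t)-\tau))$ (note $\kappa(t)-\tau=\kappa(t-\tau)$ because $\tau=M\Delta$), so the integral equals $h_\Delta(x_\Delta(\kappa(t)),x_\Delta(\kappa(t)-\tau))\big(N(t)-N(\kappa(t))\big)$; combining $|h_\Delta|\le\alpha(\Delta)$ from \eqref{es3}, the pointwise bound $N(t)-N(\kappa(t))\le\Delta N_k$, and the Poisson moment estimate \eqref{np1}, its $\tilde p$-th moment is at most $(\alpha(\Delta))^{\tilde p}\,\mathbb{E}|\Delta N_k|^{\tilde p}\le c_0(\alpha(\Delta))^{\tilde p}\Delta$. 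Summing the three contributions gives the first assertion with $c_{\tilde p}=3^{\tilde p-1}(1+C_{\tilde p}+c_0)$. The case $0<\tilde p<2$ then follows from Jensen's inequality with exponent $2/\tilde p>1$, reducing to $\tilde p=2$: $\mathbb{E}|x_\Delta(t)-\bar x_\Delta(t)|^{\tilde p}\le\big(\mathbb{E}|x_\Delta(t)-\bar x_\Delta(t)|^2\big)^{\tilde p/2}\le\big(c_2(\alpha(\Delta))^2\Delta\big)^{\tilde p/2}=c_{\tilde p}(\alpha(\Delta))^{\tilde p}\Delta^{\tilde p/2}$. Finally, the limit is immediate from \eqref{y3}: since $\Delta^{1/4}\alpha(\Delta)\le K_0$, the right-hand sides are dominated by a positive power of $\Delta$ (e.g.\ $K_0^{\tilde p}\Delta^{\tilde p/4}$ when $0<\tilde p<2$), which tends to $0$ as $\Delta\to 0$.

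I do not expect any genuine obstacle: the argument is a streamlined version of that for Lemma~\ref{L6}, and the only place needing a moment's care is the jump integral. The point is precisely that, unlike in Section~3, the coefficient $h$ is itself truncated, so $h_\Delta$ enjoys the uniform bound $\alpha(\Delta)$; this replaces the linear-growth estimate \eqref{j1} used in Lemma~\ref{L6}, produces the clean factor $(\alpha(\Delta))^{\tilde p}\Delta$ without the $(1+\mathbb{E}|\bar x_\Delta(t)|^{\tilde p}+\mathbb{E}|\bar x_\Delta(t-\tau)|^{\tilde p})$ term, and (in contrast to \eqref{3.26}) means one does not even need a preliminary moment bound on $\bar x_\Delta$ to conclude.
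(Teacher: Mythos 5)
Your proof is correct and is precisely the argument the paper intends: the paper gives no separate proof of Lemma \ref{BL5}, remarking only that since $|h_\Delta(x,y)|\le\alpha(\Delta)$ the argument of Lemma \ref{L6} carries over, and your write-up is exactly that argument, with the uniform bound \eqref{es3} replacing the linear-growth estimate \eqref{j1} in the jump term, the Poisson moment bound \eqref{np1} supplying the factor $\Delta$, and Jensen's inequality handling $0<\tilde p<2$. The only caveat --- shared with the paper's own statement, so it does not distinguish your proof from theirs --- is that for $\tilde p\ge 4$ the bound $(\alpha(\Delta))^{\tilde p}\Delta\le K_0^{\tilde p}\Delta^{1-\tilde p/4}$ obtained from \eqref{y3} is no longer a positive power of $\Delta$, so the final limit assertion for all $\tilde p>0$ does not follow from the displayed estimates alone without further restricting the choice of $\alpha(\Delta)$.
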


The following lemma states that the numerical solution is bounded in mean square.
\begin{lem}\label{BL6}
Let Assumption \ref{a4.1} and \ref{a4.2} hold, then we have
\begin{equation}
\sup _{0<\Delta \leq 1} \sup _{0\leq t \leq T}\mathbb{E} |x_\Delta (t)|^2  \leq C,~~~\forall  T>0.
\end{equation}
\end{lem}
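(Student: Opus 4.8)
The plan is to follow the pattern of Lemma~\ref{L7}: apply It\^o's formula for jump--diffusions to $|x_\Delta(\cdot)|^2$, use the coercivity bound \eqref{BL4} for the part involving only the step process $\bar x_\Delta$, control the remaining terms by the global boundedness $|f_\Delta|\vee|h_\Delta|\le\alpha(\Delta)$ from \eqref{es3} together with the one-step estimate of Lemma~\ref{BL5}, and close the argument with Gronwall's inequality. Since all three truncated coefficients are bounded by $\alpha(\Delta)$, no stopping-time localisation is needed here (unlike in Theorem~\ref{thm1}).

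First I would fix $\Delta\in(0,1]$ and $t\in[0,T]$ and apply It\^o's formula to $|x_\Delta(\cdot)|^2$ along the It\^o differential of $x_\Delta$ (with $h$ truncated to $h_\Delta$); taking expectations kills the Brownian and compensated-Poisson martingale parts and leaves
\begin{equation*}
\begin{split}
\mathbb{E}|x_\Delta(t)|^2 &= |\xi(0)|^2 + \mathbb{E}\int_0^t\Big(2x_\Delta^T(s)f_\Delta(\bar x_\Delta(s),\bar x_\Delta(s-\tau)) + |g_\Delta(\bar x_\Delta(s),\bar x_\Delta(s-\tau))|^2\\
&\qquad + \lambda\big(|x_\Delta(s)+h_\Delta(\bar x_\Delta(s),\bar x_\Delta(s-\tau))|^2 - |x_\Delta(s)|^2\big)\Big)ds.
\end{split}
\end{equation*}
I would then split $x_\Delta(s)=\bar x_\Delta(s)+(x_\Delta(s)-\bar x_\Delta(s))$ in the terms linear in $x_\Delta$, so that the contributions involving $\bar x_\Delta$ only reassemble into the left-hand side of \eqref{BL4} with $x=\bar x_\Delta(s)$, $y=\bar x_\Delta(s-\tau)$, while the residual ``cross'' terms are $2(x_\Delta(s)-\bar x_\Delta(s))^T f_\Delta(\cdots)+2\lambda(x_\Delta(s)-\bar x_\Delta(s))^T h_\Delta(\cdots)$.

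For the main part, \eqref{BL4} bounds it by $C(1+|\bar x_\Delta(s)|^2+|\bar x_\Delta(s-\tau)|^2)-K_6|\pi_\Delta(\bar x_\Delta(s))|^\sigma+K_6|\pi_\Delta(\bar x_\Delta(s-\tau))|^\sigma$, and the two super-linear terms are absorbed by a time shift: all the moments below are finite since $|\pi_\Delta(\cdot)|\le\varphi^{-1}(\alpha(\Delta))$, and
\[
\int_0^t\mathbb{E}|\pi_\Delta(\bar x_\Delta(s-\tau))|^\sigma\,ds=\int_{-\tau}^{t-\tau}\mathbb{E}|\pi_\Delta(\bar x_\Delta(u))|^\sigma\,du\le\tau\|\xi\|^\sigma+\int_0^{t}\mathbb{E}|\pi_\Delta(\bar x_\Delta(u))|^\sigma\,du,
\]
(using $\bar x_\Delta(u)=\xi(\kappa(u))$ and $|\pi_\Delta(\cdot)|\le|\cdot|$ for $u\in[-\tau,0)$), so the $-K_6\int_0^t$ and $+K_6\int_0^t$ cancel and only a constant survives. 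For the cross terms I would use $|f_\Delta|\vee|h_\Delta|\le\alpha(\Delta)$, Cauchy--Schwarz and Lemma~\ref{BL5} ($\mathbb{E}|x_\Delta(s)-\bar x_\Delta(s)|^2\le C(\alpha(\Delta))^2\Delta$) to get a bound of order $\alpha(\Delta)\big(\mathbb{E}|x_\Delta(s)-\bar x_\Delta(s)|^2\big)^{1/2}\le C(\alpha(\Delta))^2\Delta^{1/2}\le CK_0^2$, where the last step uses the standing constraint $\Delta^{1/4}\alpha(\Delta)\le K_0$ from \eqref{y3}; this bound is crucially uniform in $\Delta$.

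Collecting these estimates and bounding $\mathbb{E}|\bar x_\Delta(s)|^2$, $\mathbb{E}|\bar x_\Delta(s-\tau)|^2$ by $\big(\sup_{0\le u\le s}\mathbb{E}|x_\Delta(u)|^2\big)\vee\|\xi\|^2$ (since $\bar x_\Delta$ takes grid values and equals $\xi(\kappa(\cdot))$ for negative argument) yields
\[
\sup_{0\le u\le t}\mathbb{E}|x_\Delta(u)|^2\le C\Big(1+\int_0^t\sup_{0\le u\le s}\mathbb{E}|x_\Delta(u)|^2\,ds\Big),
\]
and Gronwall's inequality gives the claim with $C$ independent of $\Delta$. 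To legitimise the Gronwall step I would first note that, for each fixed $\Delta$, $\sup_{0\le t\le T}\mathbb{E}|x_\Delta(t)|^2<\infty$, because the explicit recursion \eqref{ls1} with $h_\Delta$ in place of $h$ has coefficients bounded by $\alpha(\Delta)$ and increments $\Delta B_k,\Delta N_k$ with finite second moments. The main obstacle I anticipate is the bookkeeping that cleanly separates the $\bar x_\Delta$-part (where \eqref{BL4} applies verbatim) from the $x_\Delta-\bar x_\Delta$ remainder, together with the observation that the remainder is controlled \emph{uniformly in $\Delta$} only thanks to the scaling restriction $\Delta^{1/4}\alpha(\Delta)\le K_0$; handling the delayed super-linear term by the time-shift-plus-initial-data trick, rather than a crude estimate, is the other delicate point.
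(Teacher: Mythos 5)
Your proposal is correct and follows essentially the same route as the paper's proof: Itô's formula for $|x_\Delta(\cdot)|^2$, the split $x_\Delta(s)=\bar x_\Delta(s)+(x_\Delta(s)-\bar x_\Delta(s))$ so that \eqref{BL4} applies to the $\bar x_\Delta$-part, the bound $2\alpha(\Delta)\int_0^t\mathbb{E}|x_\Delta(s)-\bar x_\Delta(s)|\,ds\le 2T(\alpha(\Delta))^2\Delta^{1/2}\le 2TK_0^2$ for the cross terms, and Gronwall. You additionally spell out the time-shift cancellation of the $\pm K_6|\pi_\Delta(\cdot)|^\sigma$ terms, which the paper leaves implicit; this is a welcome clarification, not a deviation.
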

\begin{proof} Since the proof is similar to that of Lemma \ref{L7}, we only highlight how to deal with the jump term.  By It\^{o}'s formula and (\ref{BL4}), we derive that, for any $\Delta \in \left ( 0,1\right ]$ and $t\in [0,T]$,
\begin{equation}\label{es8}
\begin{split}
&~~\mathbb{E} |x_\Delta (t)|^2\\
&\leq \|\xi\|^2 + \mathbb{E} \int _0^t \left (2\bar{x}_\Delta ^T (s)f_\Delta (\bar{x}_\Delta (s),\bar{x}_\Delta (s-\tau)) +|g_\Delta (\bar{x}_\Delta (s),\bar{x}_\Delta (s-\tau))|^2\right )ds\\
&~~~ + \lambda \mathbb{E}\int _0^{t}
\left (2\bar{x}_\Delta ^T (s)h_\Delta (\bar{x}_\Delta(s^-),\bar{x}_\Delta((s-\tau)^-)) +|h_\Delta (\bar{x}_\Delta(s^-),\bar{x}_\Delta((s-\tau)^-))|^2
\right )ds\\
&~~~+\mathbb{E} \int _0^t \big ( 2(x_\Delta(s)-\bar{x}_\Delta(s)) ^T f_\Delta (\bar{x}_\Delta (s),\bar{x}_\Delta (s-\tau))\\
&~~~+2\lambda(x_\Delta(s)-\bar{x}_\Delta(s)) ^T h_\Delta (\bar{x}_\Delta(s^-),\bar{x}_\Delta((s-\tau)^-))
\big )ds\\
&\leq \|\xi\|^2 + \mathbb{E} \int _0^t \big (3K_5([ 1/\varphi ^{-1}(\alpha(1))]\vee 1 )(1+|\bar{x}_\Delta (s)|^2 +|\bar{x}_\Delta (s-\tau)|^2)\\
&~~~-K_6|\pi_\Delta(\bar{x}_\Delta (s))|^\sigma+K_6|\pi_\Delta(\bar{x}_\Delta (s-\tau))|^\sigma\big )ds\\
&~~~+2(\lambda+1)\mathbb{E} \int _0^t  (x_\Delta(s)-\bar{x}_\Delta(s)) ^T\big ( f_\Delta (\bar{x}_\Delta (s),\bar{x}_\Delta (s-\tau))\\
&~~~+h_\Delta (\bar{x}_\Delta(s^-),\bar{x}_\Delta((s-\tau)^-))\big )ds. \\
\end{split}
\end{equation}
Moreover, by  \eqref{y3} and Lemma \ref{BL5}, we have
\begin{equation}\label{es10}
\begin{split}
&\mathbb{E} \int _0^t  (x_\Delta(s)-\bar{x}_\Delta(s)) ^T \left ( f_\Delta (\bar{x}_\Delta (s),\bar{x}_\Delta (s-\tau))+h_\Delta (\bar{x}_\Delta(s^-),\bar{x}_\Delta((s-\tau)^-))\right )ds\\
&\leq \mathbb{E} \int _0^t  |x_\Delta(s)-\bar{x}_\Delta(s)| \left ( |f_\Delta (\bar{x}_\Delta (s),\bar{x}_\Delta (s-\tau))|+|h_\Delta (\bar{x}_\Delta(s^-),\bar{x}_\Delta((s-\tau)^-))|\right )ds\\
&\leq 2\alpha (\Delta) \int _0^t  \mathbb{E} |x_\Delta(s)-\bar{x}_\Delta(s)| ds \leq 2T (\alpha (\Delta))^2 \Delta ^{\frac{1}{2}} \leq 2T K_0 ^2.
\end{split}
\end{equation}
Therefore, we have
\begin{equation*}
\begin{split}
\mathbb{E} |x_\Delta (t)|^2
&\leq C\left ( 1+ \int _0^t  (1+\mathbb{E}|\bar{x}_\Delta (s)|^2 +\mathbb{E}|\bar{x}_\Delta (s-\tau)|^2)ds\right )\\
&\leq C\left ( 1+ \int _0^t \sup_{0\leq u \leq s}\mathbb{E}|x_\Delta (u)|^2 ds\right ). \\
\end{split}
\end{equation*}
We could observe that the right-hand-side term is nondecreasing in $t$, hence
\begin{equation*}
\begin{split}
\sup _{0\leq u \leq t}\mathbb{E}|x_\Delta (u)|^2\leq C\left ( 1+\int _0^t\sup _{0\leq u \leq s}\mathbb{E}|x_\Delta (u)|^2ds\right ).
\end{split}
\end{equation*}
An application of Gronwall's inequality yields that
\begin{equation*}
\begin{split}
\sup _{0\leq u \leq T}\mathbb{E}|x_\Delta (u)|^2\leq C,
\end{split}
\end{equation*}
where $C$ is independent of $\Delta$.  We complete the proof.
\end{proof}

Since the boundedness of the numerical solution, the estimates of stopping times in  Lemma \ref{L9} still hold.
Now, we are going to state  the convergence of the truncated EM method for SDDEwPJs in $\mathcal{L}^p$ for $0<p <2$.
\begin{thm}\label{BL8}
Let Assumptions \ref{2.1}, \ref{a4.1} and \ref{a4.2} hold. Then, for any $p \in \left ( 0,2 \right )$, we have
\begin{equation}\label{es19}
\lim _{\Delta \rightarrow 0} \mathbb{E} |x(T)-x_\Delta(T)|^p =0,
\end{equation}
and
\begin{equation}\label{es20}
\lim _{\Delta \rightarrow 0} \mathbb{E} |x(T)-\bar{x}_\Delta(T)|^p =0.
\end{equation}
\end{thm}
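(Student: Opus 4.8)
The plan is to combine a localisation argument with the a priori moment bounds and the step–process estimate of Lemma~\ref{BL5}. Write $e_\Delta(t)=x(t)-x_\Delta(t)$, and for a real number $L>\|\xi\|$ let $\rho=\rho_{\Delta,L}=\tau_L\wedge\tau_{\Delta,L}$ with $\tau_L,\tau_{\Delta,L}$ as in Lemma~\ref{L9}. We split
\begin{equation*}
\mathbb{E}|x(T)-x_\Delta(T)|^p=\mathbb{E}\big(|e_\Delta(T)|^p\textbf{1}_{\{\rho>T\}}\big)+\mathbb{E}\big(|e_\Delta(T)|^p\textbf{1}_{\{\rho\le T\}}\big).
\end{equation*}
For the second term, since $0<p<2$, H\"older's inequality with exponents $2/p$ and $2/(2-p)$ gives
\begin{equation*}
\mathbb{E}\big(|e_\Delta(T)|^p\textbf{1}_{\{\rho\le T\}}\big)\le\big(\mathbb{E}|e_\Delta(T)|^2\big)^{p/2}\big(\mathbb{P}(\rho\le T)\big)^{(2-p)/2}\le\frac{C}{L^{\,2-p}},
\end{equation*}
where we used $\mathbb{E}|e_\Delta(T)|^2\le2(\mathbb{E}|x(T)|^2+\mathbb{E}|x_\Delta(T)|^2)\le C$ from Lemmas~\ref{BL3} and~\ref{BL6}, together with $\mathbb{P}(\rho\le T)\le\mathbb{P}(\tau_L\le T)+\mathbb{P}(\tau_{\Delta,L}\le T)\le C/L^2$ from Lemma~\ref{L9}; crucially this $C$ is independent of $\Delta$.

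For the first term, on $\{\rho>T\}$ one has $e_\Delta(T)=e_\Delta(T\wedge\rho)$, so it suffices to prove that \emph{for each fixed $L$} one has $\mathbb{E}|e_\Delta(T\wedge\rho)|^2\to0$ as $\Delta\to0$, and then use $\mathbb{E}|e_\Delta(T\wedge\rho)|^p\le(\mathbb{E}|e_\Delta(T\wedge\rho)|^2)^{p/2}$. Fix $L$ and restrict to $\Delta$ so small that $\varphi^{-1}(\alpha(\Delta))\ge L$, which is possible because $\alpha(\Delta)\to\infty$; then for every $s<\rho$ all of $x(s),x_\Delta(s),\bar x_\Delta(s)$ and their delayed counterparts have norm $<L\le\varphi^{-1}(\alpha(\Delta))$ (using $\|\xi\|<L$ on $[-\tau,0]$), so $\pi_\Delta$ acts as the identity on these arguments and $f_\Delta,g_\Delta,h_\Delta$ agree there with $f,g,h$. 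One then applies It\^o's formula to $|e_\Delta(t\wedge\rho)|^2$ exactly as in the proof of Theorem~\ref{thm1}, and in each resulting term splits the difference of coefficients into a ``true versus numerical'' part such as $f(x(s),x(s-\tau))-f(x_\Delta(s),x_\Delta(s-\tau))$, bounded by $K_L(|e_\Delta(s)|+|e_\Delta(s-\tau)|)$ via Assumption~\ref{a4.1}, and a ``continuous versus piecewise--constant'' part such as $f(x_\Delta(s),x_\Delta(s-\tau))-f(\bar x_\Delta(s),\bar x_\Delta(s-\tau))$, bounded by $K_L(|x_\Delta(s)-\bar x_\Delta(s)|+|x_\Delta(s-\tau)-\bar x_\Delta(s-\tau)|)$; on $[-\tau,0]$ the latter reduces to $|\xi(\cdot)-\xi(\kappa(\cdot))|\le\bar K\Delta^\gamma$ by Assumption~\ref{2.1}. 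Treating $g$ in the same way, and expanding the jump contribution $\lambda\int_0^{t\wedge\rho}\big(2e_\Delta^T(h-h_\Delta)+|h-h_\Delta|^2\big)ds$ analogously, then using Young's inequality and taking expectations, one obtains
\begin{equation*}
\mathbb{E}|e_\Delta(t\wedge\rho)|^2\le C_L\int_0^t\mathbb{E}|e_\Delta(s\wedge\rho)|^2\,ds+C_L\big(\Delta^{2\gamma}+(\alpha(\Delta))^2\Delta\big),
\end{equation*}
where Lemma~\ref{BL5} was used to bound $\mathbb{E}|x_\Delta(s)-\bar x_\Delta(s)|^2\le c_2(\alpha(\Delta))^2\Delta$. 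Gronwall's inequality then yields $\mathbb{E}|e_\Delta(T\wedge\rho)|^2\le C_L\big(\Delta^{2\gamma}+(\alpha(\Delta))^2\Delta\big)e^{C_LT}=:\theta_L(\Delta)$, and since $(\alpha(\Delta))^2\Delta=K_0^2\Delta^{\,1-2\varepsilon}\to0$ we get $\theta_L(\Delta)\to0$ for each fixed $L$.

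Combining the two estimates: given $\epsilon>0$, first choose $L$ so large that $C/L^{\,2-p}<\epsilon/2$, which bounds the second term uniformly in $\Delta$; then, for that $L$, choose $\Delta_0$ so that $\varphi^{-1}(\alpha(\Delta))\ge L$ and $\theta_L(\Delta)^{p/2}<\epsilon/2$ for all $\Delta\in(0,\Delta_0)$. Then $\mathbb{E}|x(T)-x_\Delta(T)|^p<\epsilon$ for such $\Delta$, which proves \eqref{es19}. Finally \eqref{es20} follows from \eqref{es19} and Lemma~\ref{BL5}, since $\mathbb{E}|x(T)-\bar x_\Delta(T)|^p\le C_p\big(\mathbb{E}|x(T)-x_\Delta(T)|^p+\mathbb{E}|x_\Delta(T)-\bar x_\Delta(T)|^p\big)$ and both terms on the right tend to $0$ as $\Delta\to0$. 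The main obstacle is the middle step: Assumption~\ref{a4.2} is only a one–point Khasminskii–type condition (used merely to furnish the moment bounds of Lemmas~\ref{BL3} and~\ref{BL6}), not a monotonicity estimate for differences, so there is no global handle on $e_\Delta$ and one must genuinely localise through $\rho$ and exploit that the truncation is invisible below level $L$ once $\Delta$ is small. Hence the order of the quantifiers (fix $L$, then let $\Delta\to0$) is essential, and the delayed arguments together with the Poisson jump term have to be tracked carefully inside the stopped It\^o expansion.
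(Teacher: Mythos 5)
Your proposal is correct and follows essentially the same route as the paper: the same localisation through $\rho=\tau_L\wedge\tau_{\Delta,L}$, the same split of $\mathbb{E}|e_\Delta(T)|^p$ over $\{\rho>T\}$ and $\{\rho\le T\}$, the tail term controlled by the uniform second-moment bounds (Lemmas \ref{BL3}, \ref{BL6}) together with Lemma \ref{L9}, and the stopped term handled by the local Lipschitz condition once $\varphi^{-1}(\alpha(\Delta))\ge L$ makes the truncation invisible. The only cosmetic differences are that you use H\"older's inequality where the paper uses the Young-type splitting \eqref{es12} (the paper's choice is reused later in Theorem \ref{BL11} to extract a rate), and that you write out the Gronwall argument for $\mathbb{E}|e_\Delta(T\wedge\rho)|^2$ which the paper delegates to the technique of Theorem 3.5 in \cite{9}.
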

\begin{proof}
Let $e_\Delta(t)=x(t)-x_\Delta(t)$ for $t\geq 0$ and $\Delta\in(0,1]$. Define $\rho _{\Delta,L} = \tau _L \wedge \tau _{\Delta,L}$. We write $\rho _{\Delta,L}=\rho$ for simplicity. Obviously,
\begin{equation}\label{es11}
\begin{split}
\mathbb{E} |e_\Delta (T)|^p
= \mathbb{E} \left ( |e_\Delta (T)|^p \textbf{I}_{\{\rho >T\}}  \right )+\mathbb{E} \left ( |e_\Delta (T)|^p \textbf{I}_{\{\rho \leq T\}}  \right ).
\end{split}
\end{equation}
Let $\delta >0$ be arbitrary. By Young's inequality, we have
\begin{equation}\label{es12}
\begin{split}
u^p v=(\delta u ^2)^{\frac{p}{2}} \left ( \frac{v ^{2/(2-p)}}{\delta^{p/(2-p)}}\right ) ^{\frac{2-p}{2}}\leq \frac{p\delta}{2} u^2 + \frac{2-p}{2\delta^{p/(2-p)}} v^{2/(2-p)}
,~~~\forall u,v >0.
\end{split}
\end{equation}
Hence,
\begin{equation}\label{es13}
\begin{split}
\mathbb{E} \left ( |e_\Delta (T)|^p \textbf{I}_{\{\rho \leq T\}}  \right )\leq\frac{p\delta}{2}\mathbb{E} |e_\Delta (T)|^2 + \frac{2-p}{2\delta ^ {p/(2-p)}}\mathbb{P} \{ \rho \leq T \}.
\end{split}
\end{equation}
Applying Lemma \ref{BL3} and \ref{BL6} yields that
\begin{equation}\label{es14}
\begin{split}
\mathbb{E} |e_\Delta (T)|^2 \leq C.
\end{split}
\end{equation}
By Lemma \ref{L9}, we have
\begin{equation}\label{es15}
\begin{split}
\mathbb{P}(\rho \leq T)\leq \mathbb{P}(\tau _{L} \leq T) +\mathbb{P}(\tau _{\Delta,L} \leq T) \leq \frac{C}{L^2}.
\end{split}
\end{equation}
Inserting \eqref{es14} and \eqref{es15} into \eqref{es13} yields that
\begin{equation}\label{es16}
\begin{split}
\mathbb{E} \left ( |e_\Delta (T)|^p \textbf{I}_{\{\rho \leq T\}}  \right )\leq\frac{C p\delta}{2} + \frac{C(2-p)}{2 L^2 \delta ^ {p/(2-p)}}.
\end{split}
\end{equation}
Let $\varepsilon$ be arbitrary. We choose $\delta$ sufficiently small such that
\begin{equation*}
\frac{C p\delta}{2} \leq \frac{\varepsilon}{3},
\end{equation*}
and choose $L$ sufficiently large such that
\begin{equation*}
\frac{C(2-p)}{2 L^2 \delta ^ {p/(2-p)}} \leq \frac{\varepsilon}{3}.
\end{equation*}
Thus,
\begin{equation}\label{es17}
\begin{split}
\mathbb{E} \left ( |e_\Delta (T)|^p \textbf{I}_{\{\rho \leq T\}}\right )\leq\frac{2\varepsilon}{3}.
\end{split}
\end{equation}
Moreover, we could use the similar technique in the proof of Theorem 3.5 in \cite{9} to prove that
\begin{equation}\label{es18}
\begin{split}
\mathbb{E} \left ( |e_\Delta (T)|^p \textbf{I}_{\{\rho > T\}}  \right )\leq \frac{\varepsilon}{3}.
\end{split}
\end{equation}
Combing \eqref{es11}, \eqref{es17} and \eqref{es18} together, we have
\begin{equation}
\begin{split}
\mathbb{E} |e_\Delta (T)|^p \leq \varepsilon .
\end{split}
\end{equation}
Hence, we get the desired result \eqref{es19}. Then combing \eqref{es19} and Lemma \ref{BL5} yields \eqref{es20}. We complete the proof.
\end{proof}

Next, in order to estimate the rate of the convergence at time $T$, we have to impose an extra condition. \\
\begin{ass}\label{a4.9}
There exists a positive constant $K_7 $ such that
\begin{equation}
\begin{split}
&2(x-\bar{x})^T (f(x,y)-f(\bar{x},\bar{y}))+|g(x,y)-g(\bar{x},\bar{y})|^2\\
&+2\lambda(x-\bar{x})^T (h(x,y)-h(\bar{x},\bar{y}))+\lambda|h(x,y)-h(\bar{x},\bar{y})|^2\\
&\leq K_7(|x-\bar{x}|^2+|y-\bar{y}|^2)-U(x,\bar{x})+U(y,\bar{y})
\end{split}
\end{equation}
for any $x,y,\bar{x},\bar{y} \in \mathbb{R} ^n $. Here, $U(\cdot ,\cdot)$ is defined as before.
\end{ass}
\begin{lem}\label{BL10}
Let Assumptions \ref{2.1}, \ref{a4.1}, \ref{a4.2} and \ref{a4.9} hold. Let $\Delta \in (0,1)$ be sufficiently small such that $\varphi ^{-1} (\alpha(\Delta)) \geq L\vee |\sup_{-\tau\le s \le 0}|\xi(s)|$. Then we have
\begin{equation}
\begin{split}
\mathbb{E}|x(T\wedge \rho_{\Delta,L})-x_\Delta(T\wedge \rho_{\Delta,L})|^2 \leq C\left ( (\alpha(\Delta))^2 \Delta ^{\frac{1}{2}} +\Delta ^{2\gamma}\right ),
\end{split}
\end{equation}
where $\rho_{\Delta,L}:= \tau _L \wedge \tau _{\Delta,L}$ and $\tau _L$, $\tau _{\Delta,L}$ is defined in Lemma \ref{L9}.
\end{lem}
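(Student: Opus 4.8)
The plan is to run the familiar ``stopped second moment plus Gronwall'' argument, the decisive simplification being that, since $\Delta$ is chosen so small that $\varphi^{-1}(\alpha(\Delta))\ge L\vee\sup_{-\tau\le s\le 0}|\xi(s)|$, the truncation is \emph{inactive} up to the stopping time $\rho:=\rho_{\Delta,L}$. Indeed, for $s<\rho$ we have $|x(s)|\vee|x_\Delta(s)|<L$, and also $|\bar{x}_\Delta(s)|=|x_\Delta(\kappa(s))|<L$ because the grid point $\kappa(s)\le s<\rho\le\tau_{\Delta,L}$, while for $s-\tau<0$ the delayed argument satisfies $|\bar{x}_\Delta(s-\tau)|\le\sup|\xi|\le\varphi^{-1}(\alpha(\Delta))$; hence $\pi_\Delta$ acts as the identity on all arguments and $f_\Delta(\bar{x}_\Delta(s),\bar{x}_\Delta(s-\tau))=f(\bar{x}_\Delta(s),\bar{x}_\Delta(s-\tau))$, and likewise for $g$ and $h$ (and at $s^-$, which matters only $dN$-a.e.). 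Thus on $[0,\rho)$ the continuous-time scheme \eqref{ls3} is driven by the \emph{original} coefficients, merely evaluated at the piecewise-constant process $\bar{x}_\Delta$.

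First I would apply It\^o's formula to $|e_\Delta(t\wedge\rho)|^2$, with $e_\Delta(t)=x(t)-x_\Delta(t)$ and $e_\Delta(0)=0$; after taking expectations the local martingale drops out (all integrands being bounded on $[0,\rho)$) and there remains $\mathbb{E}\int_0^{t\wedge\rho}\Theta(s)\,ds$, where $\Theta(s)$ is the sum of $2e_\Delta^T(s)\big(f(x(s),x(s-\tau))-f(\bar{x}_\Delta(s),\bar{x}_\Delta(s-\tau))\big)$, $|g(x(s),x(s-\tau))-g(\bar{x}_\Delta(s),\bar{x}_\Delta(s-\tau))|^2$ and the jump part $\lambda\big(2e_\Delta^T(s)\,\Delta h(s)+|\Delta h(s)|^2\big)$ with $\Delta h(s):=h(x(s^-),x((s-\tau)^-))-h(\bar{x}_\Delta(s^-),\bar{x}_\Delta((s-\tau)^-))$. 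The algebraic heart of the argument is to write each coefficient difference as a ``diagonal'' plus a ``spatial'' part, e.g.\ $f(x(s),x(s-\tau))-f(\bar{x}_\Delta(s),\bar{x}_\Delta(s-\tau))=D_f(s)+S_f(s)$ with $D_f(s)=f(x(s),x(s-\tau))-f(x_\Delta(s),x_\Delta(s-\tau))$ and $S_f(s)=f(x_\Delta(s),x_\Delta(s-\tau))-f(\bar{x}_\Delta(s),\bar{x}_\Delta(s-\tau))$, and analogously $D_g,S_g,D_h,S_h$; then one expands the squares and regroups. The bundle $2e_\Delta^T(s)D_f(s)+|D_g(s)|^2+\lambda\big(2e_\Delta^T(s)D_h(s)+|D_h(s)|^2\big)$ is precisely the left-hand side of Assumption \ref{a4.9} with $(x,\bar{x},y,\bar{y})=(x(s),x_\Delta(s),x(s-\tau),x_\Delta(s-\tau))$, hence is bounded by $K_7(|e_\Delta(s)|^2+|e_\Delta(s-\tau)|^2)-U(x(s),x_\Delta(s))+U(x(s-\tau),x_\Delta(s-\tau))$.

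The residual cross and spatial terms $2e_\Delta^TS_f$, $2\langle D_g,S_g\rangle+|S_g|^2$, $2\lambda e_\Delta^TS_h$, $\lambda(2\langle D_h,S_h\rangle+|S_h|^2)$ are then controlled by Young's inequality together with the local Lipschitz Assumption \ref{a4.1}, which applies because on $[0,\rho)$ all arguments lie in the ball of radius $L\le\varphi^{-1}(\alpha(\Delta))$; in particular each $|S_\bullet(s)|^2\le 2K_L^2\big(|x_\Delta(s)-\bar{x}_\Delta(s)|^2+|x_\Delta(s-\tau)-\bar{x}_\Delta(s-\tau)|^2\big)$, so by Lemma \ref{BL5} with $\tilde p=2$ its expectation is $\le CK_L^2\big((\alpha(\Delta))^2\Delta+\Delta^{2\gamma}\big)$, the $\Delta^{2\gamma}$ coming from the delayed shift $u=s-\tau$ falling into $[-\tau,0]$ where $|\xi(u)-\xi(\kappa(u))|^2\le\bar{K}^2\Delta^{2\gamma}$ by Assumption \ref{2.1}. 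After bounding $\int_0^{t\wedge\rho}|e_\Delta(s-\tau)|^2\,ds\le\tau\bar{K}^2\Delta^{2\gamma}+\int_0^t\mathbb{E}|e_\Delta(s\wedge\rho)|^2\,ds$ and telescoping $\int_0^{t\wedge\rho}\big(-U(x(s),x_\Delta(s))+U(x(s-\tau),x_\Delta(s-\tau))\big)ds\le\int_{-\tau}^0 U(\xi(u),\xi(\kappa(u)))\,du\le\tau\kappa_b\bar{K}^2\Delta^{2\gamma}$ (via \eqref{zs1} and Assumption \ref{2.1}), one arrives at
\[
\mathbb{E}|e_\Delta(t\wedge\rho)|^2\le C\int_0^t\mathbb{E}|e_\Delta(s\wedge\rho)|^2\,ds+C\big((\alpha(\Delta))^2\Delta+\Delta^{2\gamma}\big),\qquad 0\le t\le T,
\]
with $C$ allowed to depend on $L$. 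Gronwall's inequality closes the estimate, and since $\Delta\le 1$ we may weaken $(\alpha(\Delta))^2\Delta$ to $(\alpha(\Delta))^2\Delta^{1/2}$ to get the claimed bound. The step I expect to require the most care is exactly this bookkeeping with the tight constants in Assumption \ref{a4.9}: because that condition allots coefficient exactly $1$ to $|g(x,y)-g(\bar{x},\bar{y})|^2$ and exactly $\lambda$ to $|h(x,y)-h(\bar{x},\bar{y})|^2$, the cross terms $\langle D_g,S_g\rangle$, $\langle D_h,S_h\rangle$ produced by expanding the squares cannot be absorbed by re-using $|D_g|^2,|D_h|^2$; instead one estimates, say, $|D_g(s)|\le K_L(|e_\Delta(s)|+|e_\Delta(s-\tau)|)$ so that $2\langle D_g,S_g\rangle\le K_L^2(|e_\Delta(s)|+|e_\Delta(s-\tau)|)^2+|S_g(s)|^2$, pushing the excess onto $|e_\Delta|$-terms and onto $|S_g|^2$. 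The remaining points — that the stochastic integrals have zero mean on $[0,\rho)$ and that the delay shift is compatible with the random upper limit $t\wedge\rho$ — are routine.
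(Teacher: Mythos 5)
Your overall skeleton (truncation inactive before $\rho$, It\^{o}'s formula, Gronwall) matches the paper's, but your key decomposition differs in a way that creates a genuine problem. You split each coefficient difference as $D_\bullet+S_\bullet$, with $D_\bullet$ comparing $x$ against the continuous interpolant $x_\Delta$ and $S_\bullet$ comparing $x_\Delta$ against $\bar{x}_\Delta$, and you then control $S_f,S_g,S_h$ and the extra copies of $D_g,D_h$ by the \emph{local} Lipschitz constant $K_L$ of Assumption \ref{a4.1}. As you yourself concede, this leaves a constant ``allowed to depend on $L$'': the additive error picks up a factor $K_L^2$, and, worse, the cross terms $2\langle D_g,S_g\rangle$ and $2\lambda\langle D_h,S_h\rangle$ feed $K_L^2|e_\Delta|^2$ into the Gronwall inequality, so your final constant is of order $e^{CK_L^2T}$. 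That is strictly weaker than what this lemma must deliver: in Theorem \ref{BL11} it is invoked with $L=c_2([(\alpha(\Delta))^p\Delta^{p/4}]\vee\Delta^{p\gamma})^{-1/(2-p)}\to\infty$ as $\Delta\to0$, so any $L$-dependence of $C$ destroys the claimed rate, and since $K_R$ in Assumption \ref{a4.1} may grow arbitrarily fast in $R$ the dependence cannot be waved away.

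The paper's proof avoids introducing the spatial pieces $S_\bullet$ altogether: it splits the \emph{error} $e_\Delta(s)=(x(s)-\bar{x}_\Delta(s))+(\bar{x}_\Delta(s)-x_\Delta(s))$ in the drift and jump cross terms and applies Assumption \ref{a4.9} directly with $(x,\bar{x},y,\bar{y})=(x(s),\bar{x}_\Delta(s),x(s-\tau),\bar{x}_\Delta(s-\tau))$. The diffusion and jump squares produced by It\^{o}'s formula are already differences between $x$-arguments and $\bar{x}_\Delta$-arguments, so they match the left-hand side of Assumption \ref{a4.9} with the exact coefficients $1$ and $\lambda$, and no cross terms arise. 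The only residuals are $2(\bar{x}_\Delta(s)-x_\Delta(s))^T(f-f_\Delta)$ and its $h$-analogue, bounded by $4\alpha(\Delta)\,\mathbb{E}|x_\Delta(s)-\bar{x}_\Delta(s)|\le C(\alpha(\Delta))^2\Delta^{1/2}$ using only the bound $|f|\vee|h|\le\alpha(\Delta)$ on the stopped region together with Lemma \ref{BL5} for $\tilde p=1$, while the term $K_7|x(s)-\bar{x}_\Delta(s)|^2\le 2K_7|e_\Delta(s)|^2+2K_7|x_\Delta(s)-\bar{x}_\Delta(s)|^2$ is handled by Lemma \ref{BL5} for $\tilde p=2$. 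No Lipschitz constant enters, and $C$ is uniform in $L$ and $\Delta$. To repair your argument, take the comparison point in Assumption \ref{a4.9} to be $\bar{x}_\Delta$ rather than $x_\Delta$ and drop the $S_\bullet$ terms entirely.
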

\begin{proof}
Let $e_\Delta(t)=x(t)-x_\Delta(t)$ for $t\geq 0$ and $\Delta\in(0,1]$. We write $\rho _{\Delta,L}=\rho$ for simplicity. For $0\leq s \leq t \wedge \rho$, we observe that
\begin{equation}
\begin{split}
|x(s)|\vee|x(s-\tau)|\vee|\bar{x}_\Delta(s)|\vee|\bar{x}_\Delta(s-\tau)| \leq L \leq \varphi ^{-1} (\alpha(\Delta)).
\end{split}
\end{equation}
Recalling the definition of $f_\Delta$, $g_\Delta$ and $h_\Delta$, we obtain for $0\leq s \leq t \wedge \rho$ that
\begin{equation*}
\begin{split}
f_\Delta (\bar{x}_\Delta (s),\bar{x}_\Delta (s-\tau))=f(\bar{x}_\Delta (s),\bar{x}_\Delta (s-\tau)),
\end{split}
\end{equation*}
\begin{equation*}
\begin{split}
g_\Delta (\bar{x}_\Delta (s),\bar{x}_\Delta (s-\tau))=g(\bar{x}_\Delta (s),\bar{x}_\Delta (s-\tau)),
\end{split}
\end{equation*}
\begin{equation*}
\begin{split}
h_\Delta (\bar{x}_\Delta (s),\bar{x}_\Delta (s-\tau))=h(\bar{x}_\Delta (s),\bar{x}_\Delta (s-\tau)),
\end{split}
\end{equation*}
and
\begin{equation}
\begin{split}
&|f(x(s),x(s-\tau))|\vee|f(\bar{x}_\Delta (s),\bar{x}_\Delta (s-\tau))|\vee|h(x(s),x(s-\tau))|\\
&\vee|h(\bar{x}_\Delta (s),\bar{x}_\Delta (s-\tau))|\leq \alpha(\Delta),
\end{split}
\end{equation}

By It\^{o}'s formula and Assumption \ref{a4.9}, for any $0\leq t \leq T$ we have
\begin{equation}\label{ks0}
\begin{split}
&~~~~\mathbb{E} |e_\Delta(t \wedge \rho)|^2 \\
&\leq \mathbb{E} \int _0^{t \wedge \rho} \Big (2 (x(s)-\bar{x}_\Delta (s))^T (f(x(s),x(s-\tau))-f_\Delta (\bar{x}_\Delta (s),\bar{x}_\Delta (s-\tau))) \\
&~~~ + |g(x(s),x(s-\tau))- g_\Delta (\bar{x}_\Delta (s),\bar{x}_\Delta (s-\tau))|^2\\
&~~~+2\lambda (x(s)-\bar{x}_\Delta (s))^T (h(x(s^-),x((s-\tau)^-))-h_\Delta(\bar{x}_\Delta(s^-),\bar{x}_\Delta((s-\tau)^-)))\\
&~~~+ \lambda |h(x(s^-),x((s-\tau)^-))-h_\Delta(\bar{x}_\Delta(s^-),\bar{x}_\Delta((s-\tau)^-))|^2
 \Big )ds\\
&~~~ + \mathbb{E}\int _0^{t \wedge \rho} 2 (\bar{x}_\Delta (s)-x_\Delta (s))^T (f(x(s),x(s-\tau))-f_\Delta (\bar{x}_\Delta (s),\bar{x}_\Delta (s-\tau)))ds\\
&~~~ + \mathbb{E}\int _0^{t \wedge \rho} 2 \lambda(\bar{x}_\Delta (s)-x_\Delta (s))^T (h(x(s^-),x((s-\tau)^-))-h_\Delta(\bar{x}_\Delta(s^-),\bar{x}_\Delta((s-\tau)^-)))ds\\
&\leq \mathbb{E}\int _0^{t \wedge \rho} K_7(|x(s)-\bar{x}_\Delta (s)|^2+|x(s-\tau)-\bar{x}_\Delta (s-\tau)|^2)ds\\
&~~~+\mathbb{E}\int _0^{t \wedge \rho} (-U(x(s),\bar{x}_\Delta (s))+U(x(s-\tau),\bar{x}_\Delta (s-\tau)))ds\\
&~~~+\mathbb{E}\int _0^{t \wedge \rho} 2 |\bar{x}_\Delta (s)-x_\Delta (s)||f(x(s),x(s-\tau))-f_\Delta (\bar{x}_\Delta (s),\bar{x}_\Delta (s-\tau))|ds\\
&~~~+\mathbb{E}\int _0^{t \wedge \rho} 2 \lambda|\bar{x}_\Delta (s)-x_\Delta (s)||h(x(s^-),x((s-\tau)^-))-h_\Delta(\bar{x}_\Delta(s^-),\bar{x}_\Delta((s-\tau)^-))|ds\\
&=: J_1 +J_2 +J_3 +J_4.
\end{split}
\end{equation}
By Assumption \ref{2.1} and Lemma \ref{BL5}, similar to the proof of Theorem \ref{thm1},  we derive that
\begin{equation}\label{ks1}
\begin{split}
J_1 &\leq 4 K_7\int _0^{t}\mathbb{E}|e_\Delta(s\wedge \rho)|^2ds+C((\alpha(\Delta))^2 \Delta +\Delta^{2\gamma}),\\
J_2 &\leq \int _{-\tau}^{0}U(\xi(s),\xi(\kappa(s)))ds \leq \int _{-\tau}^{0}\kappa_b|\xi(s)-\xi(\kappa(s))|^2ds
\leq \tau \kappa_b \bar{K} ^2\Delta^{2\gamma},\\
J_3&\leq 4\alpha(\Delta)\mathbb{E}\int _0^{t \wedge \rho} |x_\Delta (s)-\bar{x}_\Delta (s)|ds\leq C(\alpha(\Delta))^2 \Delta ^{\frac{1}{2}},\\
J_4&\leq C(\alpha(\Delta))^2 \Delta ^{\frac{1}{2}}.
\end{split}
\end{equation}
These imply that
\begin{equation*}
\begin{split}
\mathbb{E} |e_\Delta(t \wedge \rho)|^2
\leq C\left (\int _0^{t}\mathbb{E}|e_\Delta(s\wedge \rho)|^2ds+(\alpha(\Delta))^2 \Delta ^{\frac{1}{2}} +\Delta^{2\gamma} \right ).
\end{split}
\end{equation*}
The required assertion follows by the Gronwall inequality.
\end{proof}
\begin{thm}\label{BL11}
Let Assumptions \ref{2.1}, \ref{a4.1}, \ref{a4.2} and \ref{a4.9} hold. Let $p \in (0,2)$, for any sufficiently small $\Delta\in(0,1)$, assume that there exists a positive constant $c_2$ such that
\begin{equation}\label{ks5}
\begin{split}
\alpha(\Delta) \geq \varphi \left ( c_2([(\alpha(\Delta))^p \Delta ^{p/4}]\vee \Delta ^{p\gamma})^{-1/(2-p)}\right ).
\end{split}
\end{equation}
 Then,  for any $T>0$, we have that
\begin{equation}\label{ks6}
\mathbb{E}|x(T)-x_{\Delta}(T)|^p \leq C\left ( [(\alpha(\Delta))^p \Delta ^{p/4}]\vee[\Delta ^{p\gamma}]
\right ),
\end{equation}
and
\begin{equation}\label{ks7}
\mathbb{E}|x(T)-\bar{x}_{\Delta}(T)|^p \leq C\left ( [(\alpha(\Delta))^p \Delta ^{p/4}]\vee[\Delta ^{p\gamma}]
\right ).
\end{equation}

\end{thm}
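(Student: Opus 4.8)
The plan is to follow the scheme of Theorem \ref{BL8}, but now to make the choice of the escape level $L$ and of the free parameter $\delta$ in Young's inequality quantitative, so that the ``interior'' contribution (on $\{\rho>T\}$) and the ``bad-event'' contribution (on $\{\rho\le T\}$) balance at the rate claimed in \eqref{ks6}. Throughout write $e_\Delta(t)=x(t)-x_\Delta(t)$, $\rho=\rho_{\Delta,L}=\tau_L\wedge\tau_{\Delta,L}$, and set
\[
M(\Delta):=\big[(\alpha(\Delta))^p\Delta^{p/4}\big]\vee\Delta^{p\gamma},\qquad L=L(\Delta):=c_2\,M(\Delta)^{-1/(2-p)}.
\]
Applying the increasing function $\varphi^{-1}$ to both sides of the hypothesis \eqref{ks5} shows $L\le\varphi^{-1}(\alpha(\Delta))$; since moreover $\varphi^{-1}(\alpha(\Delta))\to\infty$ as $\Delta\to0$, for all sufficiently small $\Delta$ (enlarging $c_2$ if needed) we also have $L>\|\xi\|$ and $\varphi^{-1}(\alpha(\Delta))\ge\sup_{-\tau\le s\le0}|\xi(s)|$, so that Lemmas \ref{L9} and \ref{BL10} may be invoked with this particular $L$.

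First I would split, exactly as in \eqref{es11},
\[
\mathbb{E}|e_\Delta(T)|^p=\mathbb{E}\big(|e_\Delta(T)|^p\textbf{I}_{\{\rho>T\}}\big)+\mathbb{E}\big(|e_\Delta(T)|^p\textbf{I}_{\{\rho\le T\}}\big).
\]
On $\{\rho>T\}$ one has $e_\Delta(T)=e_\Delta(T\wedge\rho)$, so Jensen's inequality with exponent $p/2\in(0,1)$, Lemma \ref{BL10}, and the elementary bound $(a+b)^{p/2}\le a^{p/2}+b^{p/2}$ give
\[
\mathbb{E}\big(|e_\Delta(T)|^p\textbf{I}_{\{\rho>T\}}\big)\le\big(\mathbb{E}|e_\Delta(T\wedge\rho)|^2\big)^{p/2}\le C\big((\alpha(\Delta))^2\Delta^{1/2}+\Delta^{2\gamma}\big)^{p/2}\le C\,M(\Delta).
\]
For the second term I would invoke the Young-type estimate \eqref{es12} with a free $\delta>0$, take expectations, and combine $\mathbb{E}|e_\Delta(T)|^2\le C$ (from Lemmas \ref{BL3} and \ref{BL6}) with $\mathbb{P}(\rho\le T)\le\mathbb{P}(\tau_L\le T)+\mathbb{P}(\tau_{\Delta,L}\le T)\le C/L^2$ (Lemma \ref{L9}), obtaining
\[
\mathbb{E}\big(|e_\Delta(T)|^p\textbf{I}_{\{\rho\le T\}}\big)\le\frac{Cp\delta}{2}+\frac{C(2-p)}{2L^2\delta^{p/(2-p)}}.
\]

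Choosing $\delta=L^{-(2-p)}$ makes both summands equal to a constant multiple of $L^{-(2-p)}=c_2^{-(2-p)}M(\Delta)$; together with the bound for the first term this yields $\mathbb{E}|x(T)-x_\Delta(T)|^p\le C\,M(\Delta)$, which is \eqref{ks6}. Finally \eqref{ks7} follows from \eqref{ks6}, the inequality $|x(T)-\bar{x}_\Delta(T)|^p\le 2^p\big(|e_\Delta(T)|^p+|x_\Delta(T)-\bar{x}_\Delta(T)|^p\big)$, and Lemma \ref{BL5}, since for $\Delta\in(0,1]$ one has $\mathbb{E}|x_\Delta(T)-\bar{x}_\Delta(T)|^p\le c_p(\alpha(\Delta))^p\Delta^{p/2}\le c_p(\alpha(\Delta))^p\Delta^{p/4}\le c_p\,M(\Delta)$.

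The main obstacle, and the reason \eqref{ks5} is imposed, is the tension in the choice of $L$: to make the escape probability $\mathbb{P}(\rho\le T)$ small one wants $L$ large, whereas Lemma \ref{BL10} — the step that actually produces a rate out of the discretisation — is available only when $L\le\varphi^{-1}(\alpha(\Delta))$, i.e.\ when the truncation still reaches level $L$. The prescribed $L=c_2M(\Delta)^{-1/(2-p)}$ is, up to the constant $c_2$, the largest level for which the $\{\rho\le T\}$ contribution, of order $L^{-(2-p)}$, does not exceed the $\{\rho>T\}$ contribution, of order $M(\Delta)$, and \eqref{ks5} is precisely the compatibility condition that this $L$ still lies in the admissible range for Lemma \ref{BL10}. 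Verifying that this balance is sharp — and, in particular, that the constant $C$ produced by Lemma \ref{BL10} is genuinely independent of $L$ and $\Delta$, so that no power of $\Delta$ is lost in passing from the mean-square estimate on the stopped process to the $\mathcal{L}^p$ estimate on all of $[0,T]$ — is the crux of the proof.
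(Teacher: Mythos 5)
Your proposal is correct and follows essentially the same route as the paper: the same split over $\{\rho>T\}$ and $\{\rho\le T\}$, the Young/Chebyshev bound with $\delta$ chosen so that both bad-event terms equal a constant multiple of $M(\Delta)=[(\alpha(\Delta))^p\Delta^{p/4}]\vee\Delta^{p\gamma}$ (your $\delta=L^{-(2-p)}$ coincides with the paper's $\delta=M(\Delta)$ up to the factor $c_2^{-(2-p)}$), Lemma \ref{BL10} via Jensen for the interior term with \eqref{ks5} guaranteeing $L\le\varphi^{-1}(\alpha(\Delta))$, and Lemma \ref{BL5} for \eqref{ks7}. Your explicit remark that $e_\Delta(T)=e_\Delta(T\wedge\rho)$ on $\{\rho>T\}$ is in fact a small tightening of the paper's slightly loose final display, which applies the stopped-process estimate directly to $\mathbb{E}|x(T)-x_\Delta(T)|^2$.
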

\begin{proof}
We use the notation of $e_\Delta (t)$ and $\rho _{\Delta,L}$ as before. We write $\rho _{\Delta,L}=\rho$ for simplicity. By \eqref{es11} and \eqref{es16}, one can see  that
\begin{equation}
\begin{split}
\mathbb{E}  |e_\Delta (T)|^p
&=\mathbb{E} \left ( |e_\Delta (T)|^p \textbf{I}_{\{\rho > T\}}  \right )+\mathbb{E} \left ( |e_\Delta (T)|^p \textbf{I}_{\{\rho \leq T\}}  \right )\\
&\leq \mathbb{E} |e_\Delta (T\wedge \rho)|^p  +\frac{C p\delta}{2} + \frac{C(2-p)}{2 L^2 \delta ^ {p/(2-p)}},
\end{split}
\end{equation}
for any $\Delta\in(0,1)$, $L>\|\xi\|$ and $\delta >0$.  Choosing $\delta = [(\alpha(\Delta))^p \Delta ^{p/4}]\vee[\Delta ^{p\gamma}]$ and $L=c_2 ([(\alpha(\Delta))^p \Delta ^{p/4}]\vee \Delta ^{p\gamma})^{-1/(2-p)}$, we obtain
\begin{equation}
\begin{split}
\mathbb{E}  |e_\Delta (T)|^p
&\leq \mathbb{E} |e_\Delta (T\wedge \rho)|^p  +C \left ([(\alpha(\Delta))^p \Delta ^{p/4}]\vee[\Delta ^{p\gamma}] \right ).
\end{split}
\end{equation}
By the condition \eqref{ks5}, we derive that
\begin{equation}
\begin{split}
\varphi ^{-1}(\alpha(\Delta)) \geq c_2([(\alpha(\Delta))^p \Delta ^{p/4}]\vee \Delta ^{p\gamma})^{-1/(2-p)}=L.
\end{split}
\end{equation}
Using  Lemma \ref{BL10}, one has that
\begin{equation}
\begin{split}
&\mathbb{E}|x(T)-x_{\Delta}(T)|^p \leq \left (\mathbb{E}|x(T)-x_{\Delta}(T)|^2\right )^{\frac{p}{2}}\\
&\leq C\left ( [(\alpha(\Delta))^2 \Delta ^{\frac{1}{2}}]\vee[\Delta ^{2\gamma}]
\right )^{\frac{p}{2}}
\leq C\left ( [(\alpha(\Delta))^p \Delta ^{p/4}]\vee[\Delta ^{p\gamma}]
\right ).
\end{split}
\end{equation}
Combing Lemma \ref{BL5} and \eqref{ks6} together, we can derive \eqref{ks7}. We complete the proof.
\end{proof}
\begin{rem}
If we impose an additional condition: Assume that there exist constants $K_8 >0$ and $\bar{\beta} \in [0,1)$ such that
\begin{equation}\label{ks8}
\begin{split}
&|f(x,y)-f(\bar{x},\bar{y})|\vee |h(x,y)-h(\bar{x},\bar{y})| \\
&\leq K_8(1+|x|^{\bar{\beta}} +|y|^{\bar{\beta}} +|\bar{x}|^{\bar{\beta}} +|\bar{y}|^{\bar{\beta}})(|x-\bar{x}|+|y-\bar{y}|)\\
\end{split}
\end{equation}
for any $x,y,\bar{x},\bar{y} \in \mathbb{R} ^n $, we could obtain better convergence rate. But our main result can cover more equations without this condition \eqref{ks8}.
\end{rem}

\section{Example }

In this section, we give an example to illustrate our theories. Consider the super-linear scalar SDDEwPJs
\begin{equation}\label{ms1}
\begin{split}
&dx(t) = (-5x^3 (t)+\frac{1}{8}|x(t-\tau)|^{\frac{5}{4}}+2x(t))dt +(\frac{1}{2}|x(t)|^{\frac{3}{2}} +x(t-\tau))dB(t)\\
&\qquad +(x(t^-)+x((t-\tau)^-))dN(t),
\end{split}
\end{equation}
with the initial value $\xi =\{x(\theta):-\tau \leq \theta \leq 0\}$ which satisfies Assumption \ref{2.1}. Here $B(t)$ is a scalar Brownian motion and $N(t)$ is a scalar Poisson process with intensity $\lambda =0.2$.

Now we are verifying the Assumption \ref{a1} - \ref{a3}.

It is easy to see that
\begin{equation}\label{ms2}
\begin{split}
&|f(x,y)-f(\bar{x},\bar{y})|\vee|g(x,y)-g(\bar{x},\bar{y})|\\
& = |(-5x^3+\frac{1}{8}|y|^{\frac{5}{4}}+2x)-(-5\bar{x}^3+\frac{1}{8}|\bar{y}|^{\frac{5}{4}}+2\bar{x})| \vee|(\frac{1}{2}|x|^{\frac{3}{2}} +y)-(\frac{1}{2}|\bar{x}|^{\frac{3}{2}} +\bar{y})|\\
&\leq 10 (1+|x|^2 +|y|^2 +|\bar{x}|^2 +|\bar{y}|^2)(|x-\bar{x}|+|y-\bar{y}|),
\end{split}
\end{equation}
and
\begin{equation}\label{ms3}
\begin{split}
&|h(x,y)-h(\bar{x},\bar{y})|=|(x+y)-(\bar{x}+\bar{y})|\leq 10 (|x-\bar{x}|+|y-\bar{y}|).
\end{split}
\end{equation}
Hence, Assumption \ref{a1} is satisfied with $\beta =2$. Moreover,  we can see that
\begin{equation}\label{ms4}
\begin{split}
&(x-\bar{x})^T (f(x,y)-f(\bar{x},\bar{y}))\\
&=5(x-\bar{x})^2(-(x^2 +x\bar{x}+\bar{x}^2)) +2(x-\bar{x})^2 +\frac{1}{8}(x-\bar{x})(|y|^{\frac{5}{4}}-|\bar{y}|^{\frac{5}{4}})\\
&\leq 5(x-\bar{x})^2(-\frac{1}{2}(x^2 +\bar{x}^2))+3(x-\bar{x})^2+\frac{25}{256}|y-\bar{y}|^2(|y|^{\frac{1}{4}}+|\bar{y}|^{\frac{1}{4}})^2\\
&\leq -\frac{5}{2}|x-\bar{x}|^2(|x|^2+|\bar{x}|^2) +3|x-\bar{x}|^2 +\frac{25}{64}|y-\bar{y}|^2+\frac{25}{128}|y-\bar{y}|^2(|y|^2+|\bar{y}|^2).
\end{split}
\end{equation}
Let $\bar{\eta}=3$. In the same way, we can derive
\begin{equation}\label{ms5}
\begin{split}
&\frac{\bar{\eta} -1}{2} |g(x,y)-g(\bar{x},\bar{y})|^2
=|(\frac{1}{2}|x|^{\frac{3}{2}} +y)-(\frac{1}{2}|\bar{x}|^{\frac{3}{2}} +\bar{y})|^2\\
&\leq\frac{1}{2}||x|^{\frac{3}{2}} -|\bar{x}|^{\frac{3}{2}}|^2+2|y-\bar{y}|^2
\leq\frac{9}{8}|x-\bar{x}|^2(|x|^{\frac{1}{2}}+|\bar{x}|^{\frac{1}{2}})^2+2|y-\bar{y}|^2\\
&\leq\frac{9}{2}|x-\bar{x}|^2+\frac{9}{4}|x-\bar{x}|^2(|x|^2+|\bar{x}|^2)+2|y-\bar{y}|^2.
\end{split}
\end{equation}
Combing \eqref{ms4} and \eqref{ms5} gives that
\begin{equation}\label{ms6}
\begin{split}
&(x-\bar{x})^T (f(x,y)-f(\bar{x},\bar{y}))+\frac{\bar{\eta} -1}{2} |g(x,y)-g(\bar{x},\bar{y})|^2\\
&\leq 8(|x-\bar{x}|^2+|y-\bar{y}|^2)-\frac{1}{4}|x-\bar{x}|^2(|x|^2+|\bar{x}|^2)
+\frac{1}{4}|y-\bar{y}|^2(|y|^2+|\bar{y}|^2).
\end{split}
\end{equation}
Therefore, Assumption \ref{a2} is satisfied with $U(x,\bar{x})=\frac{1}{4}|x-\bar{x}|^2(|x|^2+|\bar{x}|^2)$. Noting that
\begin{equation}\label{ms7}
\begin{split}
&x^T f(x,y) +\frac{\bar{p} -1}{2} |g(x,y)|^2
=x(-5x^3+\frac{1}{8}|y|^{\frac{5}{4}}+2x) +\frac{\bar{p} -1}{2}|\frac{1}{2}|x|^{\frac{3}{2}} +y|^2\\
&\leq C(1+|x|^2 +|y|^2).
\end{split}
\end{equation}
 Thus, Assumption \ref{a3} is satisfied as well.

Additionally, it is easy to see that
\begin{equation}\label{ms8}
\sup_{|x|\vee|y| \le r} \left ( |f(x,y)| \vee |g(x,y)| \vee |h(x,y)|\right ) \le 5r^3,~~~\forall r\geq 1.
\end{equation}
hence, we can choose $\varphi (r)=5r^3$. This means $\varphi ^{-1} (r)=(\frac{r}{5})^{\frac{1}{3}}$. In order for  $q\geq \frac{1+\beta}{\varepsilon}$ to hold, we set $\bar{p}=26$ such that Assumption \ref{a3} be satisfied. Then $q\in ((1+\beta)\bar{\eta},\bar{p})=q\in (9,26)$. We choose $q=25$. Moreover, let $\varepsilon =\frac{1}{8}$, then $q\geq \frac{1+\beta}{\varepsilon}$ is satisfied. So  $\alpha(\Delta) = K_0 \Delta ^{-\frac{1}{8}}$. By Theorem \ref{thm1}, we have
\begin{equation}\label{ms9}
\mathbb{E}|x(T)-x_{\Delta}(T)|^2 \leq C\Delta^{2\gamma},~~~\forall \gamma \in (0,\frac{3}{8}),
\end{equation}
and
\begin{equation}\label{ms10}
\mathbb{E}|x(T)-x_{\Delta}(T)|^2 \leq C\Delta^{\frac{3}{4}},~~~\forall \gamma \in [\frac{3}{8},1],
\end{equation}
which means that the $\mathcal{L}^2$-convergence rate of the truncated EM method for SDDEwPJs \eqref{ms1} is $(2\gamma)\wedge\frac{3}{4}$.

On the other hand, let us verify the Assumption \ref{a4.1}, \ref{a4.2} and \ref{a4.9}. By \eqref{ms2} and \eqref{ms3}, we find that Assumption \ref{a4.1} is satisfied. In addition, we have
\begin{equation}\label{ms11}
\begin{split}
&2x^T f(x,y) +|g(x,y)|^2 +\lambda(2x^T h(x,y) +|h(x,y)|^2)\\
&=2x(-5x^3+\frac{1}{8}|y|^{\frac{5}{4}}+2x) +|\frac{1}{2}|x|^{\frac{3}{2}} +y|^2+\lambda(2x(x+y) +|x+y|^2)\\
&\leq C(1+|x|^2 +|y|^2).
\end{split}
\end{equation}
Thus, Assumption \ref{a4.2} is satisfied. By \eqref{ms4} and \eqref{ms5}, we obtain
\begin{equation}\label{ms12}
\begin{split}
&2(x-\bar{x})^T (f(x,y)-f(\bar{x},\bar{y}))+|g(x,y)-g(\bar{x},\bar{y})|^2\\
&+2\lambda(x-\bar{x})^T (h(x,y)-h(\bar{x},\bar{y}))+\lambda|h(x,y)-h(\bar{x},\bar{y})|^2\\
&\leq 11(|x-\bar{x}|^2+|y-\bar{y}|^2)-\frac{11}{4}|x-\bar{x}|^2(|x|^2+|\bar{x}|^2)+\frac{11}{4}|y-\bar{y}|^2(|y|^2+|\bar{y}|^2)\\
&~~~+5\lambda|x-\bar{x}|^2+3\lambda|y-\bar{y}|^2\\
&\leq 12 (|x-\bar{x}|^2+|y-\bar{y}|^2)-\frac{11}{4}|x-\bar{x}|^2(|x|^2+|\bar{x}|^2) +\frac{11}{4}|y-\bar{y}|^2(|y|^2+|\bar{y}|^2).
\end{split}
\end{equation}
Hence, Assumption \ref{a4.9} is satisfied with $U(x,\bar{x})=\frac{11}{4}|x-\bar{x}|^2(|x|^2+|\bar{x}|^2)$.

Choose $\varphi(r)=5r^3$, $c_2=(\frac{1}{5})^{\frac{1}{3}}$. Let $0<p < \frac{2}{12\gamma +1}$ and
define
\begin{equation}\label{ms14}
\begin{split}
\alpha(\Delta) =\Delta^{-\varepsilon},~~~\forall \varepsilon \in \left[ \left( \frac{3p}{8(1+p)} \vee \frac{3p\gamma}{2-p} \right),\frac{1}{4}\right].
\end{split}
\end{equation}
Then condition \eqref{ks5} is satisfied, that is
\begin{equation*}
\begin{split}
\alpha(\Delta) \geq \varphi \left ( c_2([(\alpha(\Delta))^p \Delta ^{p/4}]\vee \Delta ^{p\gamma})^{-1/(2-p)}\right ).
\end{split}
\end{equation*}
By Theorem \ref{BL11}, we have
\begin{equation}\label{ms15}
\mathbb{E}|x(T)-x_{\Delta}(T)|^p \leq C\Delta^{p\left( (\frac{1}{4}-\varepsilon)\wedge \gamma\right)},
\end{equation}
which means that the $\mathcal{L}^p$-convergence($p \in (0,2)$) rate of the truncated EM method for SDDEwPJs \eqref{ms1} is $p\left( (\frac{1}{4}-\varepsilon)\wedge \gamma\right)$.

\section*{Acknowledgements}

This work is supported by the National Natural Science Foundation of China (Grant No. 61876192), NSF of Jiangxi(Grant Nos. 20192ACBL21007, 2018ACB21001), the Fundamental Research Funds for the Central Universities(CZT20020) and Academic Team in Universities(KTZ20051).

\end{document}